
\documentclass[]{interact}
\usepackage{lmodern}
\usepackage{xcolor}
\usepackage{epstopdf}
\usepackage[caption=false]{subfig}
\usepackage{mathtools} 
\usepackage[numbers,sort&compress]{natbib}
\bibpunct[, ]{[}{]}{,}{n}{,}{,}
\makeatletter
\def\NAT@def@citea{\def\@citea{\NAT@separator}}
\makeatother

\theoremstyle{plain}
\newtheorem{theorem}{Theorem}[section]
\newtheorem{lemma}[theorem]{Lemma}

\newtheorem{proposition}[theorem]{Proposition}
\newtheorem{assumption}[theorem]{Assumption}
\theoremstyle{definition}
\newtheorem{definition}[theorem]{Definition}
\newtheorem{example}[theorem]{Example}

\theoremstyle{remark}

\newcommand{\R}{\mathbb{R}}

\usepackage{algorithm}
\usepackage{algpseudocode,algorithmicx}
\makeatletter
\renewcommand{\ALG@name}{Algorithm}
\makeatother
\usepackage{algcompatible}
\usepackage{physics}
\usepackage{hyperref}

\begin{document}


\title{A Steepest Gradient Method with Nonmonotone Adaptive Step-sizes for the Nonconvex Minimax and Multi-Objective Optimization Problems}

\author{
\name{Nguyen Duc Anh\textsuperscript{a} and Tran Ngoc Thang\textsuperscript{a,}$^\ast$\thanks{$^\ast$Corresponding author. Email: thang.tranngoc@hust.edu.vn}}
\affil{\textsuperscript{a}Faculty of Mathematics and Informatics, Hanoi University of Science and Technology, Hanoi, Vietnam}
}

\maketitle

\begin{abstract}
This paper proposes a new steepest gradient descent method for solving nonconvex finite minimax problems using non-monotone adaptive step sizes and providing proof of convergence results in cases of the nonconvex, quasiconvex, and pseudoconvex differentiate component functions. The proposed method is applied using a referenced-based approach to solve the nonconvex multiobjective programming problems. The convergence to weakly efficient or Pareto stationary solutions is proved for pseudoconvex or quasiconvex multiobjective optimization problems, respectively. A variety of numerical experiments are provided for each scenario to verify the correctness of the theoretical results corresponding to the algorithms proposed for the minimax and multiobjective optimization problems.
\end{abstract}

\begin{keywords}
Steepest descent; Nonmonotone Adaptive stepsize; Nonconvex Minimax problem; Multiobjective optimization; Quasiconvex and Pseudoconvex
\end{keywords}

\section{Introduction}
Nonconvex finite minimax problems, denoted by (MP), involve minimizing a function that is the maximum of a finite number of smooth nonconvex real-valued functions on the space $\mathbb{R}^n.$ The complexity of these problems arises from the non-differentiability and nonconvex nature of the objective function, making them challenging to solve using traditional optimization methods for smooth convex functions. Although there are many challenges, this problem attracts research interest because it has numerous applications in economics \cite{wiecek2007advances}, finances \cite{el2020finance, fliege2014robust}, engineering \cite{eschenauer2012multicriteria},  technology \cite{jimenez2017pareto}, and more.  Furthermore, this problem is closely related to nonconvex multiobjective optimization problems. Finding an optimal solution for a multiobjective optimization problem can be transformed into a finite minimax problem using the Tchebycheff scalarization method. In this paper, we propose a new algorithm to solve (MP), which is then applied to address nonconvex multiobjective optimization problems using a reference-based approach. Many algorithms have been proposed to solve (MP) such as the steepest gradient descent method \cite{pshenichny1978numerical}, sequential quadratic programming  \cite{charalambous1978efficient, jian2020globally}, interior point methods \cite{lukvsan2005primal, lukvsan2004interior, vanderbei1999interior}, conjugate gradient algorithms \cite{konnov2020non} and smoothing methods \cite{pee2011solving}.

Recently, the smoothing method has been widely studied and used for solving (MP) (see \cite{pee2011solving}). However, this method does not guarantee convergence for cases with nonconvex component functions, particularly quasiconvex and pseudoconvex functions. Since the algorithms for solving optimization problems in tool libraries used for training deep learning models employ the steepest gradient method, we are interested in algorithms that follow this approach, such as \cite{polak2008algorithm, polak2008convergence, pshenichny1978numerical}. Pshenichny et al. \cite{pshenichny1978numerical} proposed an algorithm based on linearization with regard to the combination of line search and steepest descent technique. The work addresses the minimax problem with convex component functions and proves convergence to a globally optimal solution. However, in the nonconvex case, such as with pseudoconvex component functions, convergence to a stationary point is not guaranteed. We will state and prove the convergence of this algorithm in nonconvex, quasiconvex, and pseudoconvex cases.

When considering the application of the algorithm \cite{pshenichny1978numerical} to machine learning problems, we find that the line search procedure in this algorithm can be time-consuming at each iteration when used in the training process of a model, where the component functions are loss functions. Therefore, we developed a line search-free variant of this algorithm using a nonmonotonic adaptive step size. With such a step size, each iteration only requires evaluating the expression once, and depending on the value of the evaluation, the step size is adjusted—either increased or decreased—at each iteration. The convergence of the variant algorithm in nonconvex, quasiconvex, and pseudoconvex cases also will be proven here.

The descent method is another major approach to solve the multicriteria optimization problems. It does not need any information about the parameters beforehand and recent literature has focused a lot on these terms, for example, the methods of steepest descent \cite{fliege2000steepest}, conjugate gradient \cite{lucambio2018nonlinear}, Newton \cite{wang2019extended} and projected gradient \cite{cruz2011convergence, drummond2004projected}. These methods are iterative and use a consistent gradient-based search to guarantee that the objective function values decrease with each iteration. Arbaki \cite{akbari2024new} proposed a nonsmooth non-monotone line search approach to handle the Lipschitz function, corresponding to the global convergence based on some assumptions. Another approach for handling the multicriteria optimization is based on the Tchebycheff formulation. This approach considers solving the minimax problem, i.e., minimization of the maximum of component functions with the direction of concessions \cite{kaliszewski2012interactive, thang2020monotonic}. This method is distinctive due to its reference-based characteristics, meaning that starting from an initial vector, it enables the identification of weakly efficient points across the Pareto set, essentially solving the minimax problem. Consequently, the efficient set can be approximated by solving a series of minimax problems with chosen directional adjustments. We introduce an algorithm tailored for addressing multiobjective optimization using the minimax approach and demonstrate its convergence in non-convex, quasiconvex, and pseudoconvex scenarios.


The organization of this paper is outlined as follows: In Section 2, we cover some essential concepts. Next, we propose the algorithms and their characteristics in Section 3 corresponding to their proof of convergence. Formulation between the multiobjective optimization and the minimax problem are provided within Section 4. Section 5 provides the algorithm's numerical experiments for the proposed algorithms. Finally, we conclude our findings and propose potential avenues for future research in the last Section.

\section{Preliminaries}
\subsection{Notations}
Consider the non-empty set $D \subseteq \mathbb{R}^n$, is called convex if
for all $\theta^1, \theta^2 \in D$ và $0 \leq \lambda \leq 1$, we have $\lambda \theta^1+(1-\lambda) \theta^2 \in$ $D$. Given the set $L\subseteq \mathbb{R}^n$, we denote Conv $L$ is the smallest convex shape enclosing the set of points in $L$, i.e. 
\begin{equation*}
    \text{Conv } L = \{\theta \in L: \theta=\sum_{i=1}^k \lambda_i \theta^i, \theta^i \in L, i = \overline{1, k}\},
\end{equation*}
where the real number $\lambda_1, \cdots, \lambda_k \geq 0$ and $\displaystyle\sum_{i=1}^k \lambda_i=1$. 

Consider a function  $g: D \to \overline{\mathbb{R}} = \mathbb{R} \cup \{-\infty , +\infty\},$ defined on the Banach space $D.$  The generalized subdifferential  \cite{penot1997generalized} of function $g$ at $\theta\in D$ is described in the dual space $D^*$  as follows:
\[
\upartial^{\uparrow} g(\theta) = \left\{ \theta^* \in D^* : \left\langle \theta^*, v \right\rangle \leq g^{\uparrow}(\theta, v), \forall v \in D \right\},
\]
where the generalized Clarke-Rockafellar derivative of $g$ at $\theta$ along the direction $v$ is  
\[
g^{\uparrow}(\theta, v) = \sup_{\varepsilon > 0} \lim_{(\xi, \alpha) \downarrow_g \theta; t \to 0^+} \sup_{u \in B(v, \varepsilon)} [g(\xi + t u) - \alpha] / t,
\]
in there $(\xi, \alpha) \downarrow_g \theta$ represents $\xi \rightarrow \theta$, $\alpha \rightarrow g(\theta)$, $\alpha \geq g(\xi)$.

Given the non-empty set $D \subseteq \R^n$, a continuous function $g$ is said to be
\begin{itemize}
    \item[a)] \textit{convex} on $D$ if, given any $\theta, \xi \in D$, then
    \begin{equation*}
        g\left(\lambda \theta+(1-\lambda) \xi\right) \leq \lambda g\left(\theta\right)+(1-\lambda) g\left(\xi\right),
    \end{equation*}
    for all $\lambda \in [0,1];$
    \item[b)] \textit{pseudoconvex} on $D$ if, given any $\theta, \xi \in D$, then
\[
    \forall \theta, \xi \in X: g(\xi) < g(\theta) \rightarrow \forall \theta^* \in \upartial^{\uparrow} g(\theta): \langle \theta^*, \xi - \theta \rangle < 0;
\]

    \item[c)] \textit{quasiconvex} on $D$ if, given any $\theta, \xi \in D$, then
    \begin{equation*}
    g(\lambda \theta + (1-\lambda) \xi) \le \max\{ g(\theta) , g(\xi) \},
    \end{equation*}
    for all $\lambda \in [0,1].$
\end{itemize}

In the case that $g$ is locally Lipschitz, the generalized Clarke-Rockafellar subdifferential is similarly defined as follows \cite{mifflin1977semismooth}: 
\[
\upartial^C g(\theta) = \left\{ \theta^* \in D^* : \left\langle \theta^*, v \right\rangle \leq g^0(\theta, v), \forall v \in D \right\},
\]
with  
\[
g^0(\theta, v) = \limsup_{\zeta \rightarrow \theta, t \to 0^+} [g(\zeta + t v) - g(\zeta)] / t.
\]
A function $g$ is called \textit{quasidifferentiable} \cite{mifflin1977semismooth} at $\theta$ if $g'(\theta, d)$ exists and equals $g^0(\theta, d)$ for each $d \in \R^n$, i.e.
\begin{equation*}
    g'(\theta, d) = g^0(\theta, d), \quad \forall d \in \R^n,
\end{equation*}
where we denote $g'(\theta, d)$ by the directional derivative of $g$ at $\theta$ along the direction $d$ as follows
\begin{equation*}
    g'(\theta, d) = \lim_{t \to 0^+} \dfrac{g(\theta^0 + td) - g(\theta^0)}{t}.
\end{equation*}

Consider a function $g: \R^n \to \R,$ locally Lipschitz at the open ball center $\theta$  and quasidifferentiable at $\theta$. The function $g$ is called \textit{semiconvex} \cite{mifflin1977semismooth} at $\theta \in X \subseteq \R^n$ if 
 $\theta + d \in X$ and $g'(\theta, d) \ge 0$ leads to $g(\theta+d) \ge g(\theta)$.

\subsection{Basic Results}
Consider the continuous function $g: \R^n \to \R$ and sequence of differential functions $g_i: \R^n \to \R$ with respect to its gradient $\nabla g_i(\theta)$ for $i = \overline{1, m}$. The following assertions are used later.
\begin{proposition}{\cite{agrawal2020disciplined}}
\label{boyy}
    Let $g_i : \R^n \to \R$, $i = \overline{1, m}$ be quasiconvex functions. Then, $G(\theta) = \displaystyle\max_i\{g_i(\theta), i = \overline{1, m}\}$ is quasiconvex.
\end{proposition}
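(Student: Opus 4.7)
The plan is to verify the defining inequality of quasiconvexity for $G$ directly from the same inequality applied componentwise. Fix arbitrary $\theta, \xi \in \R^n$ and $\lambda \in [0,1]$; the goal is to establish
\[
G(\lambda \theta + (1-\lambda)\xi) \;\leq\; \max\{G(\theta), G(\xi)\}.
\]

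For each $i \in \{1,\dots,m\}$, quasiconvexity of $g_i$ (as stated in part (c) of the definition) yields
\[
g_i(\lambda \theta + (1-\lambda)\xi) \;\leq\; \max\{g_i(\theta), g_i(\xi)\}.
\]
Since $g_i(\theta) \leq G(\theta)$ and $g_i(\xi) \leq G(\xi)$ by definition of the pointwise maximum, the right-hand side is bounded above by $\max\{G(\theta), G(\xi)\}$. Taking the maximum over $i$ on the left preserves the inequality and recovers $G(\lambda \theta + (1-\lambda)\xi)$ on that side, which concludes the argument.

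An equivalent and equally short route is the sublevel-set characterization: a continuous function is quasiconvex if and only if each sublevel set $\{\,g \le \alpha\,\}$ is convex. For $G$ one has $\{\,G \le \alpha\,\} = \bigcap_{i=1}^m \{\,g_i \le \alpha\,\}$, a finite intersection of convex sets, hence convex. I would mention this second viewpoint only if a sublevel-set formulation is used elsewhere in the paper; otherwise the direct computation above is the cleanest presentation.

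There is no real obstacle in this proof: the result is a classical stability property of quasiconvex functions under finite pointwise maxima, and its verification is a one-line manipulation on top of the definition. The only care point is to make sure the version of the quasiconvexity definition invoked is exactly the one given in Section 2 of the paper (the inequality on chords), so that the proof reads as a self-contained consequence of the text already established.
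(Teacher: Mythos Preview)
Your argument is correct and entirely standard: the direct verification of the chord inequality for $G$ from the componentwise quasiconvexity of each $g_i$ is exactly how one establishes this closure property, and your alternative via sublevel sets is equally valid.

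Note, however, that the paper does not supply its own proof of this proposition; it is stated with a citation to \cite{agrawal2020disciplined} and used as a known preliminary result. So there is no ``paper's proof'' to compare against --- your write-up would simply be filling in a detail the authors chose to quote rather than prove. If you include it, the first version (direct inequality) is preferable since it matches the definition of quasiconvexity given in Section~2(c) of the paper verbatim.
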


\begin{lemma}{\cite{xu2002iterative}}
\label{sequence_lem}
    Let $\left\{u_k\right\} ;\left\{v_k\right\} \subset(0 ; +\infty)$ satisfies
$$
u_{k+1} \leq u_k+v_k \; \forall k \geq 0 ; \quad \sum_{k=0}^{+\infty} v_k<+\infty .
$$
Therefore, a limit exists $\lim _{k \rightarrow +\infty} u_k=c \in \mathbb{R}$.
\end{lemma}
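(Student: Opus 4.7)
The plan is to convert the supermartingale-like inequality $u_{k+1} \leq u_k + v_k$ into a genuinely monotone sequence by absorbing the error terms into a tail sum. Since $\sum v_k$ converges, the tail $T_k := \sum_{j=k}^{\infty} v_j$ is well-defined, finite, and tends to $0$ as $k \to \infty$. The right auxiliary sequence to consider is
\begin{equation*}
w_k := u_k + T_k = u_k + \sum_{j=k}^{\infty} v_j.
\end{equation*}

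First I would verify that $\{w_k\}$ is non-increasing. Using the hypothesis,
\begin{equation*}
w_{k+1} = u_{k+1} + \sum_{j=k+1}^{\infty} v_j \leq u_k + v_k + \sum_{j=k+1}^{\infty} v_j = u_k + \sum_{j=k}^{\infty} v_j = w_k.
\end{equation*}
Since $u_k > 0$ and $T_k \geq 0$, we have $w_k \geq 0$, so $\{w_k\}$ is a monotone non-increasing sequence bounded below. By the monotone convergence theorem, $w_k \to c$ for some $c \in [0, +\infty)$.

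Finally, because $\sum v_k < +\infty$, the tail $T_k \to 0$, so
\begin{equation*}
\lim_{k \to \infty} u_k = \lim_{k \to \infty} (w_k - T_k) = c - 0 = c \in \mathbb{R}.
\end{equation*}
This yields the claimed limit. There is no real obstacle here: the only subtlety is choosing the correct auxiliary sequence, and once $w_k = u_k + T_k$ is introduced the rest is immediate from monotone convergence and the vanishing of the tail of a convergent series. One minor point worth stating explicitly is that the positivity assumption $u_k > 0$ is used only to ensure a lower bound for $w_k$; any fixed lower bound on $\{u_k\}$ would suffice.
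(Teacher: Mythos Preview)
Your proof is correct: defining $w_k = u_k + \sum_{j\ge k} v_j$ gives a non-increasing nonnegative sequence whose limit coincides with $\lim_k u_k$ once the tail vanishes. Note, however, that the paper does not supply its own proof of this lemma---it is quoted as a known result from \cite{xu2002iterative}---so there is no in-paper argument to compare against; your proposal simply fills in the standard justification that the authors omit.
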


\begin{proposition}{\cite{penot1997generalized}}
\label{prop41}
    If $g$ is quasiconvex and locally Lipschitz, then it is $\partial^{C}$-quasiconvex in the sense that the following condition holds:
$$
\forall \theta, \xi \in X, g(\xi)<g(\theta) \Rightarrow \forall \theta^* \in \partial^{C} g(\theta):\left\langle \theta^*, \xi-\theta\right\rangle \leq 0 .
$$
\end{proposition}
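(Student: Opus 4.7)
The plan is to reduce the statement to a pointwise gradient inequality at nearby points of differentiability, then use the standard characterization of the Clarke subdifferential as a closed convex hull of such limiting gradients. Since $g$ is locally Lipschitz on $\mathbb{R}^n$, Rademacher's theorem applies, so the set $\Omega_g$ of points where $g$ is (Fr\'echet) differentiable has full Lebesgue measure in a neighborhood of $\theta$, and one has
\begin{equation*}
\partial^{C} g(\theta) = \operatorname{conv}\Bigl\{\lim_{n\to\infty}\nabla g(\zeta_n):\ \zeta_n\to\theta,\ \zeta_n\in\Omega_g,\ \nabla g(\zeta_n)\ \text{converges}\Bigr\}.
\end{equation*}
By the bilinearity of $\langle\cdot,\cdot\rangle$ and the fact that convex combinations preserve the inequality $\langle\,\cdot\,,\xi-\theta\rangle\le 0$, it suffices to establish the bound for every limiting gradient at $\theta$.

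The next step is a pointwise estimate at differentiable points. Fix $\theta,\xi\in X$ with $g(\xi)<g(\theta)$. Since $g$ is continuous, there exists $r>0$ with $g(\zeta)>g(\xi)$ for all $\zeta\in B(\theta,r)$. Take any sequence $\zeta_n\to\theta$ with $\zeta_n\in\Omega_g\cap B(\theta,r)$. Quasiconvexity of $g$ applied to the segment joining $\zeta_n$ and $\xi$ yields, for every $t\in[0,1]$,
\begin{equation*}
g\bigl(\zeta_n+t(\xi-\zeta_n)\bigr)\le\max\{g(\zeta_n),g(\xi)\}=g(\zeta_n).
\end{equation*}
Dividing by $t>0$ and letting $t\downarrow 0$ gives the one-sided directional inequality $\langle\nabla g(\zeta_n),\xi-\zeta_n\rangle\le 0$.

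The concluding step is a limit passage. If $\nabla g(\zeta_n)$ converges to some vector $w$, then taking $n\to\infty$ in $\langle\nabla g(\zeta_n),\xi-\zeta_n\rangle\le 0$ produces $\langle w,\xi-\theta\rangle\le 0$. Every $\theta^{*}\in\partial^{C}g(\theta)$ is, by the representation above, a convex combination of such limiting gradients $w_1,\dots,w_k$, each satisfying $\langle w_i,\xi-\theta\rangle\le 0$, so $\langle\theta^{*},\xi-\theta\rangle\le 0$ as well. The main subtlety I anticipate is the justification that the representation of $\partial^{C}g(\theta)$ via limits of gradients at differentiability points is legitimate in this generality, which relies on the local Lipschitz hypothesis combined with Rademacher's theorem; without local Lipschitzness, the Clarke--Rockafellar subdifferential $\partial^{\uparrow} g$ need not admit such a clean description, which is exactly why the locally Lipschitz assumption is built into the statement.
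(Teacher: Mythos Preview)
The paper does not prove this proposition: it is stated as a cited result from Penot~\cite{penot1997generalized} and invoked later without further argument. Your proof is a correct, self-contained derivation. The only points worth tightening are cosmetic: the segment $[\zeta_n,\xi]$ must stay in the domain on which quasiconvexity is assumed (so $X$ should be convex, which is implicit in the paper's setup), and the Clarke representation you use is precisely Theorem~2.5.1 of Clarke's \emph{Optimization and Nonsmooth Analysis}, which holds under the locally Lipschitz hypothesis exactly as you note. Since the paper offers no proof to compare against, there is nothing further to contrast.
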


\begin{proposition}{\cite{mifflin1977semismooth}} If $g$ is pseudoconvex and differentiable, then $g$ is a semiconvex function.
    \label{pseudo_semi}
\end{proposition}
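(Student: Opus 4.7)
The plan is to unfold the two definitions (pseudoconvexity and semiconvexity) for the differentiable case and then argue by contradiction. First I would note that since $g$ is differentiable, the generalized subdifferential $\upartial^{\uparrow} g(\theta)$ contains the gradient $\nabla g(\theta)$, so the pseudoconvexity hypothesis in the paper's definition specializes to the usual differentiable version: for all $\theta,\xi \in X$,
\[
g(\xi) < g(\theta) \;\Longrightarrow\; \langle \nabla g(\theta), \xi - \theta \rangle < 0.
\]
Likewise, since $g$ is differentiable, its directional derivative is $g'(\theta,d) = \langle \nabla g(\theta), d \rangle$ and coincides with the Clarke upper derivative $g^{0}(\theta,d)$, so $g$ is automatically quasidifferentiable in the sense required by the definition of semiconvexity.

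Next I would verify the semiconvexity condition directly. Fix $\theta \in X$ and a direction $d$ with $\theta + d \in X$ and $g'(\theta,d) \ge 0$; by the preceding observation this becomes $\langle \nabla g(\theta), d \rangle \ge 0$. Suppose, for contradiction, that $g(\theta+d) < g(\theta)$. Setting $\xi := \theta + d$, pseudoconvexity yields
\[
\langle \nabla g(\theta), \xi - \theta \rangle = \langle \nabla g(\theta), d \rangle < 0,
\]
which contradicts $g'(\theta,d) \ge 0$. Therefore $g(\theta+d) \ge g(\theta)$, which is exactly the semiconvexity property at $\theta$.

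I do not anticipate any genuine obstacle in this argument; the only point to be careful about is the passage from the abstract subdifferential $\upartial^{\uparrow}g(\theta)$ in the pseudoconvexity definition to the singleton $\{\nabla g(\theta)\}$ available under differentiability, and the analogous identification of $g'(\theta,d)$ with $\langle \nabla g(\theta),d\rangle$ needed to invoke the semiconvexity definition. Both identifications are standard for (continuously) differentiable $g$, after which the contradiction step is immediate.
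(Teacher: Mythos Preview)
Your argument is correct and is exactly the standard route to this result: specialize the abstract pseudoconvexity inequality to the gradient, identify $g'(\theta,d)$ with $\langle\nabla g(\theta),d\rangle$, and derive a contradiction from $g(\theta+d)<g(\theta)$. The only minor addition worth making explicit is that the paper's definition of semiconvexity presupposes local Lipschitz continuity as well as quasidifferentiability, so you should note (as the paper does separately in Proposition~\ref{smooth_clarke}) that continuous differentiability already gives this.

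As for comparison with the paper: the paper does not supply its own proof of this proposition. It is stated as a quoted result from Mifflin~\cite{mifflin1977semismooth} and used as a black box later in the proof of Theorem~\ref{lemma33-6}. So there is nothing in the paper to compare your argument against; your write-up simply fills in what the paper leaves to the cited reference.
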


\begin{proposition}{\cite{mifflin1977semismooth}}
    Let $g: \R^n \to \R$ be differentiable continuous, then $g$ is locally Lipschitz, $\partial^C g(\theta) = \{ \nabla g(\theta)\}$ for each $\theta \in \R^n$, and $g$ is quasidifferentiable over $\R^n$.  
    \label{smooth_clarke}
\end{proposition}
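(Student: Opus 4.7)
The proposition packages three well-known facts about a continuously differentiable function (I read ``differentiable continuous'' as $C^1$, since mere pointwise differentiability is not enough to guarantee local Lipschitzness), so my plan is to dispatch them one by one, using the mean value theorem as the single tool that binds them together.

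First, for the local Lipschitz claim, I would fix $\theta\in\R^n$ and choose a closed ball $\overline{B}(\theta,r)$ on which $\|\nabla g\|$ is bounded by some $L<\infty$; such a bound exists because $\nabla g$ is continuous and $\overline{B}(\theta,r)$ is compact. For any $x,y$ in this ball the segment joining them is contained in the ball, and the mean value inequality applied to the real-valued map $t\mapsto g(x+t(y-x))$ gives $|g(x)-g(y)|\le L\|x-y\|$, which is exactly the local Lipschitz property at $\theta$.

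Second, I would compute the Clarke generalized directional derivative directly from its definition. For fixed $v\in\R^n$, writing
\[
\frac{g(\zeta+tv)-g(\zeta)}{t}=\langle \nabla g(\zeta+s_{\zeta,t}\,tv),\,v\rangle
\]
for some $s_{\zeta,t}\in(0,1)$ supplied by the mean value theorem, the continuity of $\nabla g$ forces this quotient to tend to $\langle\nabla g(\theta),v\rangle$ as $(\zeta,t)\to(\theta,0^+)$. Hence $g^0(\theta,v)=\langle\nabla g(\theta),v\rangle$, which is linear in $v$. Plugging this into the definition of $\upartial^C g(\theta)$ gives the set of all $\theta^*$ satisfying $\langle\theta^*-\nabla g(\theta),v\rangle\le 0$ for every $v\in\R^n$; taking $v=\pm(\theta^*-\nabla g(\theta))$ pins down $\theta^*=\nabla g(\theta)$, so $\upartial^C g(\theta)=\{\nabla g(\theta)\}$.

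Third, for quasidifferentiability I only have to check that $g'(\theta,d)=g^0(\theta,d)$ for every direction $d$. Because $g$ is differentiable in the classical sense, the one-sided directional derivative equals $g'(\theta,d)=\langle\nabla g(\theta),d\rangle$, and the previous step already identified $g^0(\theta,d)$ with the same linear expression, so equality is immediate.

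None of the three parts hides a real obstacle; the only point requiring a sliver of care is the Clarke derivative computation, where I must let both $\zeta$ and $t$ vary simultaneously in the $\limsup$, and it is precisely the \emph{continuity} of $\nabla g$ (rather than only pointwise differentiability) that legitimizes passing to the limit uniformly through $\nabla g(\zeta+s_{\zeta,t}tv)\to\nabla g(\theta)$.
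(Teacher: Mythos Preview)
Your proof is correct; the interpretation of ``differentiable continuous'' as $C^1$ is essential and you rightly flag it, since without continuity of $\nabla g$ neither local Lipschitzness nor the identification $g^0(\theta,v)=\langle\nabla g(\theta),v\rangle$ would follow.

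As for comparison: the paper does not supply its own proof of this proposition. It is listed among the preliminary ``Basic Results'' with a citation to Mifflin~\cite{mifflin1977semismooth} and is used later as a black box (in the proofs of Theorems~\ref{lemma33-5} and~\ref{lemma33-6}). So your three-step argument via the mean value theorem is not an alternative to the paper's approach but rather a self-contained justification of a fact the paper simply imports. What your version buys is independence from the reference and an explicit reminder of why the $C^1$ hypothesis, rather than mere differentiability, is what is actually being used.
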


\begin{theorem}{\cite{mifflin1977semismooth}}
\label{semismooth}
    Assume that $g_1, g_2, \ldots, g_m$ are locally Lipschitz functions. $X$ is a subset of $\mathbb{R}^n$. Let $G(\theta) = \displaystyle\max_i\{g_i(\theta), i = \overline{1, m}\}$ and $J(\theta) = \{ i : g_i(\theta) = G(\theta) \}$.
    \begin{itemize}
        \item[i)] $g$ is Lipschitz continuous. Moreover, for each $\theta \in \R^n$,
        \begin{equation*}
            \partial^C G(\theta) \subset \text{ Conv}\{\partial^C g_i(\theta), i \in J(\theta) \}.
        \end{equation*}
        \item[ii)] If $g_1, g_2, \ldots, g_m$ are semiconvex (quasidifferentiable) over $X$, then $G$ is semiconvex (quasidifferentiable) over $X$ and for each $\theta \in \mathbb{R}^n$, 
        \begin{equation*}
            \partial^C G(\theta) = \text{ Conv}\{ \partial^C g_i(\theta): i \in J(\theta) \}.
        \end{equation*}
    \end{itemize}
\end{theorem}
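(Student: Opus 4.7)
The plan is to split the argument into the two parts and follow the classical max-function strategy for locally Lipschitz maps. Throughout I will work with the generalized directional derivative $g^0(\theta, v)$ introduced earlier, using the fact that $\partial^C g(\theta)$ is precisely the set of continuous linear forms dominated by $v \mapsto g^0(\theta, v)$.

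For part (i), I would first establish local Lipschitz continuity of $G$ from the elementary inequality $|\max_i a_i - \max_i b_i| \leq \max_i |a_i - b_i|$. On any bounded neighborhood, the finite family $g_1, \ldots, g_m$ admits a common Lipschitz constant $L$, so $|G(\theta)-G(\xi)| \leq \max_i |g_i(\theta)-g_i(\xi)| \leq L\|\theta - \xi\|$. For the subdifferential inclusion, the central calculation is to bound $G^0(\theta, v) \leq \max_{i\in J(\theta)} g_i^0(\theta, v)$. The key step is a continuity argument on the index set: by continuity of each $g_i$ and finiteness of $m$, there is a neighborhood of $\theta$ and a threshold $t_0>0$ such that for $(\zeta, t)$ near $(\theta, 0)$ every active index at $\zeta + tv$ lies in $J(\theta)$. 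Picking such an index $i = i(\zeta,t)$ that attains the maximum at $\zeta+tv$, one obtains
\[
\frac{G(\zeta+tv) - G(\zeta)}{t} \leq \frac{g_i(\zeta+tv) - g_i(\zeta)}{t},
\]
and passing to the $\limsup$ gives $G^0(\theta, v) \leq \max_{i \in J(\theta)} g_i^0(\theta, v)$. The inclusion $\partial^C G(\theta) \subseteq \mathrm{Conv}\{\partial^C g_i(\theta) : i \in J(\theta)\}$ then follows by a separation / bipolar argument, since the right-hand side is the closed convex set whose support function is exactly $v \mapsto \max_{i\in J(\theta)} g_i^0(\theta, v)$.

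For part (ii), the semiconvexity (equivalently, quasidifferentiability in the sense stated) of each $g_i$ gives $g_i^0(\theta, d) = g_i'(\theta, d)$ for every direction. I would first establish the standard max-calculus identity $G'(\theta, d) = \max_{i\in J(\theta)} g_i'(\theta, d)$ by a direct computation with difference quotients, using again that active indices at $\theta + td$ are confined to $J(\theta)$ for small $t>0$. Next I would complement the inequality from part (i) with its reverse: fixing $i \in J(\theta)$ and taking $\zeta = \theta$ in the $\limsup$ defining $G^0(\theta, v)$ shows $G^0(\theta, v) \geq g_i^0(\theta, v)$, hence $G^0(\theta, v) = \max_{i\in J(\theta)} g_i^0(\theta, v)$. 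Matching the two identities yields $G'(\theta, d) = G^0(\theta, d)$, i.e.\ quasidifferentiability of $G$. Semiconvexity of $G$ then follows: if $G'(\theta, d) \geq 0$ and $\theta + d \in X$, choose $i^* \in J(\theta)$ with $g_{i^*}'(\theta, d) \geq 0$; semiconvexity of $g_{i^*}$ gives $g_{i^*}(\theta+d) \geq g_{i^*}(\theta) = G(\theta)$, whence $G(\theta+d) \geq G(\theta)$. Finally, the reverse subdifferential inclusion is obtained by observing that the support function of $\mathrm{Conv}\{\partial^C g_i(\theta) : i\in J(\theta)\}$ in the direction $d$ equals $\max_{i\in J(\theta)} g_i^0(\theta, d) = G^0(\theta, d)$, so by the duality characterization this convex hull coincides with $\partial^C G(\theta)$.

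The main obstacle I anticipate is the careful bookkeeping around the continuity of the active-index map, which underlies essentially every inequality in the proof. Establishing that for $(\zeta, t)$ sufficiently close to $(\theta, 0)$ one has $J(\zeta + tv) \subseteq J(\theta)$ is where the finiteness of $m$ and continuity of each $g_i$ come together, and it is precisely this step that fails in the infinite-maximum setting. Once this is nailed down, the rest of the argument is routine application of the Clarke-Rockafellar subdifferential calculus together with the semiconvexity hypothesis to upgrade inequalities into the equalities required in part (ii).
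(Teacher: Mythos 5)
The paper itself offers no proof of this result---it is imported wholesale from the cited Mifflin reference---so there is nothing internal to compare against; your sketch follows the same classical max-function route that the cited source uses (local Lipschitz continuity of the max, confinement of active indices near $\theta$ to $J(\theta)$, the bound $G^0(\theta,v)\le\max_{i\in J(\theta)}g_i^0(\theta,v)$, and a support-function/duality identification of the convex hull, which is legitimate because that hull is compact and convex). As an outline, parts (i) and the semiconvexity transfer in (ii) are sound.

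One step in part (ii) is stated too loosely to stand as written: you claim that ``taking $\zeta=\theta$ in the $\limsup$ defining $G^0(\theta,v)$'' already gives $G^0(\theta,v)\ge g_i^0(\theta,v)$ for $i\in J(\theta)$. Restricting to $\zeta=\theta$ only yields $G^0(\theta,v)\ge\limsup_{t\to0^+}\,[G(\theta+tv)-G(\theta)]/t\ge\limsup_{t\to0^+}\,[g_i(\theta+tv)-g_i(\theta)]/t$, i.e.\ a Dini upper derivative of $g_i$ at $\theta$; identifying this quantity with $g_i^0(\theta,v)$ is exactly where the quasidifferentiability hypothesis $g_i'(\theta,\cdot)=g_i^0(\theta,\cdot)$ must be invoked. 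Without it the inequality is simply false: take $g_1(x)=-\abs{x}$, $g_2(x)=-2\abs{x}$ on $\R$, so $G=\max(g_1,g_2)=-\abs{x}$ and $J(0)=\{1,2\}$; then $G^0(0,1)=1<2=g_2^0(0,1)$, and $\partial^C G(0)=[-1,1]\neq[-2,2]=\mathrm{Conv}\{\partial^C g_1(0)\cup\partial^C g_2(0)\}$. This also explains why the equality in (ii) genuinely requires the semiconvexity/quasidifferentiability assumption and cannot come from subdifferential calculus alone. Since you do record $g_i^0(\theta,d)=g_i'(\theta,d)$ at the start of part (ii), the chain $G^0(\theta,v)\ge\limsup_{t\to0^+}[g_i(\theta+tv)-g_i(\theta)]/t=g_i'(\theta,v)=g_i^0(\theta,v)$ closes the gap; just make that dependence explicit rather than attributing the reverse inequality to the choice $\zeta=\theta$ alone. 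With that repair, the rest of your argument (the identity $G'(\theta,d)=\max_{i\in J(\theta)}g_i'(\theta,d)$, semiconvexity of $G$, and the final support-function comparison of compact convex sets) goes through.
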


In the next Section, we present our algorithm designed for solving the \eqref{MP} problem. This algorithm is developed using non-monotone adaptive strategies combined with the steepest descent method. 

\section{Steepest Gradient Algorithms with Nonmonotone Adaptive Step-sizes for Problem (MP)}
\subsection{Minimax problem}
Consider the functions $g_i: \R^n \to \R$, $i = \overline{1, m}$ be continuously differentiable. Let $G(\theta) = \displaystyle\max_i\{g_i(\theta), i = \overline{1, m}\}.$ The nonconvex finite minimax problem is considered as follows:
\begin{alignat*}{2}\label{MP}
\min & \; G(\theta), \tag{MP} \\
\text {s.t } & \theta \in \R^n. 
\end{alignat*}
\begin{assumption}
We consider the following assumptions:
\begin{itemize}
    \item[(A1)] The Problem \eqref{MP} has the solution.
    \item[(A2)] The domain $\Omega=\left\{\theta: G(\theta) \leqslant G\left(\theta^0\right)\right\}$ be bounded.
    \item[(A3)] $\nabla g_i(\theta), i = \overline{1, m}$ satisfying the Lipschitz condition in $\Omega$ with constant $L$.
\end{itemize} 
\end{assumption}

\subsection{The nonmonotone adaptive steepest gradient algorithm}
Firstly, we provide our algorithm for solving the \eqref{MP} problem, based on non-monotone adaptive strategies and steepest descent method, as being described below:

\begin{algorithm}[H]
\caption{Adaptive Non-monotone strategies for solving the \eqref{MP} problem}
\hspace*{\algorithmicindent} \textbf{Input:} Approximation point $\theta^0$\\
\begin{algorithmic}[1]
\State Initialize $\varepsilon \in \left(0, \dfrac{1}{2}\right), \alpha_0 = 1, \delta > 0, \sigma \in (0,1), \{\eta_k\} \in (0,1)$, such that $\displaystyle \sum_{k=0}^{+\infty} \eta_k < +\infty$, iteration $k=0$, $s= 0$.

\State At the iteration $k$, $p_k$ is the solution of the subsidiary problem \eqref{sub_prob} at $\theta^k$, that is

\begin{equation}
    \beta + \dfrac{1}{2}\norm{p}^2, \label{sub_prob}\tag{SP}
\end{equation}
with the constraints as follows
\begin{equation*}
    \langle \nabla g_i(\theta), p \rangle + g_i(\theta) - \beta(\theta) \le 0, i \in J_{\delta}(\theta),
\end{equation*}
in there, $\delta>0$ và $J_{\delta}(\theta) \coloneqq \{j = \overline{1, m}: g_j(\theta) \ge G(\theta) - \delta\}$.

\IF{$G(\theta^k + \alpha_k p_k) \le G(\theta^{k}) - \alpha_k \varepsilon \norm{p_k}^2$}
    \STATE $\alpha_{k+1} = \alpha_k + \eta_k \sigma^{s}$
\ELSE
    \STATE $\alpha_{k+1} = \alpha_k \sigma$.
    \STATE $s = s + 1$
\ENDIF

\State Update $k = k+1$ and $\theta^{k+1} = \theta^k + \alpha_{k} p_k$. If $\theta^{k+1} = \theta^k$ then \textit{STOP} else proceed to \textbf{\textit{Step 2}}.
\end{algorithmic}    

\hspace*{\algorithmicindent} \textbf{Output:} Optimal Solution of the \eqref{MP} problem.
\label{algo1}
\end{algorithm}

Following this, we will present the convergence analysis of Algorithm \ref{algo1}, addressing the three scenarios: non-convex, quasiconvex, and pseudoconvex, under certain mild assumptions.

\subsection{Convergence of the algorithms}








\begin{lemma}\label{lemma33-1}
    The sequence of step-size $\{\alpha_k\}$ generated by the Algorithm \ref{algo1} is not converge to $0$. 
\end{lemma}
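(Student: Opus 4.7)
The plan is to show that $\alpha_k$ is uniformly bounded below by a strictly positive constant, which immediately precludes $\alpha_k\to 0$. The key step is to exhibit a threshold $\bar\alpha>0$ such that whenever $\alpha_k\le\bar\alpha$ the sufficient-decrease test
\[
G(\theta^k+\alpha_k p_k)\le G(\theta^k)-\alpha_k\varepsilon\|p_k\|^2
\]
in Step~3 of Algorithm~\ref{algo1} is automatically satisfied, so that the algorithm is forced into its IF-branch and $\alpha_k$ cannot be shrunk further.

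To construct $\bar\alpha$ I would first exploit feasibility of $(p,\beta)=(0,G(\theta^k))$ in \eqref{sub_prob} to obtain $\beta_k\le G(\theta^k)-\tfrac{1}{2}\|p_k\|^2$, and then, testing the constraint against any index $j^\ast$ with $g_{j^\ast}(\theta^k)=G(\theta^k)$ (which lies in $J_\delta(\theta^k)$ by definition), derive the a priori bound $\tfrac{1}{2}\|p_k\|^2\le\|\nabla g_{j^\ast}(\theta^k)\|\,\|p_k\|$. Combined with assumptions~(A2)--(A3), this yields uniform bounds $\|p_k\|\le M_p$ and $\|\nabla g_i(\theta^k)\|\le M_g$. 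For any active index $i\in J_\delta(\theta^k)$, the descent lemma arising from the $L$-Lipschitz continuity of $\nabla g_i$ together with the subproblem constraint $\langle\nabla g_i(\theta^k),p_k\rangle\le\beta_k-g_i(\theta^k)$ then produces, for every $\alpha\in(0,1]$,
\[
g_i(\theta^k+\alpha p_k)\le G(\theta^k)-\tfrac{\alpha}{2}(1-L\alpha)\|p_k\|^2,
\]
which is dominated by $G(\theta^k)-\alpha\varepsilon\|p_k\|^2$ as soon as $\alpha\le(1-2\varepsilon)/L$; here $\varepsilon<\tfrac{1}{2}$ ensures this threshold is strictly positive. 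For an inactive index $i\notin J_\delta(\theta^k)$, the strict gap $g_i(\theta^k)<G(\theta^k)-\delta$, combined with $M_p$, $M_g$ and a further application of the descent lemma, supplies a separate strictly positive $\alpha$-threshold. Taking the minimum of these two thresholds and of $1$ produces the required uniform $\bar\alpha>0$.

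With $\bar\alpha$ in hand I would finish by a one-step induction showing that $\alpha_k\ge\min\{1,\sigma\bar\alpha\}$ for every $k\ge 0$. The base case is $\alpha_0=1$; in the IF-branch $\alpha_{k+1}=\alpha_k+\eta_k\sigma^{s}\ge\alpha_k$; in the ELSE-branch the contrapositive of the previous paragraph forces $\alpha_k>\bar\alpha$, so $\alpha_{k+1}=\sigma\alpha_k>\sigma\bar\alpha$. Either way the bound propagates, and hence $\alpha_k$ cannot tend to $0$.

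The main obstacle I anticipate is making the bounds $M_p$ and $M_g$ genuinely iteration-independent: assumption~(A3) is stated only on $\Omega$, whereas the ELSE-branch does not automatically guarantee $\theta^{k+1}\in\Omega$. I plan to handle this by working on a compact neighbourhood of $\Omega$ that provably contains every iterate---available because a failed-decrease step already shrinks to $\alpha_{k+1}=\sigma\alpha_k$ with $\sigma<1$ and $\|p_k\|\le M_p$, so the iterates can only escape $\Omega$ by a bounded amount---and invoking continuity of each $\nabla g_i$ there.
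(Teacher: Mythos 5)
Your proposal is correct and follows the same overall strategy as the paper's proof: combine the subproblem value bound $\beta_k \le G(\theta^k)-\tfrac12\norm{p_k}^2$ with the active constraints and the Lipschitz descent estimate to get sufficient decrease for $i\in J_\delta(\theta^k)$ below the threshold of order $(\tfrac12-\varepsilon)/L$, and use the $\delta$-gap plus a gradient bound to get a second threshold covering $i\notin J_\delta(\theta^k)$; the paper's $\overline{\alpha}_k=\min\{1,\alpha_k^1,\alpha_k'\}$ is exactly your $\bar\alpha$ construction. Where you genuinely diverge, you improve on the paper: (i) you obtain a uniform bound $\norm{p_k}\le 2\norm{\nabla g_{j^*}(\theta^k)}$ directly by testing the constraint at a maximizing index, which makes $\inf_k\overline{\alpha}_k>0$ immediate, whereas the paper instead asserts ``$\norm{p_k}\not\to+\infty$ because a solution exists'' and runs an infimum-plus-subsequence contradiction that rests on that unproved claim; (ii) you make explicit the induction $\alpha_k\ge\min\{1,\sigma\bar\alpha\}$ linking the threshold to the IF/ELSE update rule, a step the paper omits entirely (it jumps from $\overline{\alpha}\ne 0$ to ``$\alpha_k$ does not converge to $0$'', and even appends the non sequitur $p_k\to 0$, which belongs to the later lemmas); (iii) the obstacle you flag --- that (A2)--(A3) give $K$ and $L$ only on $\Omega$ while the non-monotone update $\theta^{k+1}=\theta^k+\alpha_k p_k$ is applied even after a failed test, so iterates need not stay in $\Omega$ --- is a real gap, but it is one the paper silently ignores rather than resolves; your compact-neighbourhood fix would still need care (the a priori bound on $\norm{p_k}$ itself uses the gradient bound along the trajectory), though a bounded-drift argument on an enlarged compact set on which the $g_i$ remain $C^1$ can be made to work. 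Net: same skeleton, but your version closes two soft spots in the paper's own argument and honestly identifies a third.
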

\begin{proof}
Denote $p_k \equiv p(\theta^k), \beta_k \equiv \beta(\theta^k)$, in there $\theta^k$ is the approximation at iteration $k$.

Let $i \in J_{\delta}(\theta^k)$, consider the generalized langrange expansion as follows
\begin{align*}
    g_i(\theta^k+\alpha p_k) &= g_i(\theta^k) + \alpha \langle \nabla g_i(\theta^k), p_k\rangle + \alpha \langle \nabla g_i(\xi_k) - \nabla g_i(\theta^k), p_k \rangle \\
    &\le g_i(\theta^k) + \alpha \langle \nabla g_i(\theta^k), p_k \rangle + L\alpha^2 \norm{p_k}^2,
\end{align*}
in there $\xi_k = \theta^k + \alpha \zeta_1 p_k$, $\zeta_1 \in (0,1)$. 

For $i \notin J_{\delta}(\theta^k)$, that is,
\begin{align*}
    g_i(\theta^k + \alpha p_k) &= g_i(\theta^k) + \alpha \langle \nabla g_i(\xi_{k0}), p_k \rangle \\
    &\le g_i(\theta^k) + \alpha K \norm{p_k} \\
    &\le G(\theta^k) - \delta + \alpha K \norm{p_k},
\end{align*}
in there $K = \max_{\theta \in \Omega} \norm{\nabla g_i(\theta)}$ and $\xi_{k0} = \theta^k + \alpha\zeta_2p_k, \zeta_2 \in (0,1)$.

Consider $i \in J_{\delta}(\theta^k)$, we have
\begin{align*}
    g_i(\theta^k + \alpha p_k) &\le g_i(\theta^k) + \alpha \langle \nabla g_i(\theta^k), p_k \rangle + L\alpha^2 \norm{p_k}^2 \\
    &\le g_i(\theta^k) + \alpha (\beta(\theta^k) - g_i(\theta^k)) + L\alpha^2 \norm{p_k}^2 \\
    &\le (1 - \alpha) g_i(\theta^k) + \alpha \left(G(\theta^k) - \dfrac{1}{2} \norm{p_k}^2\right) + L\alpha^2 \norm{p_k}^2 \\
    &\le G(\theta^k) - \dfrac{\alpha}{2} \norm{p_k}^2 + L\alpha^2 \norm{p_k}^2.
\end{align*}

If we choose $0 \le \alpha \le \alpha_k^1$, $\alpha_k^1 = \dfrac{\delta}{\norm{p_k}\left( K + \dfrac{1}{2} \norm{p_k}\right)}$, then we have
\[
    g_i(\theta^k + \alpha p_k) \le G(\theta^k) - \dfrac{\alpha}{2} \norm{p_k}^2 + L\alpha^2 \norm{p_k}^2 \quad i = \overline{1, m}, 
\]
which is equivalent to 
\[
    G(\theta^k + \alpha p_k) \le G(\theta^k) - \dfrac{\alpha}{2} \norm{p_k}^2 + L\alpha^2 \norm{p_k}^2, \quad i = \overline{1, m}.
\]

For $i = \overline{1, m}$,
\[
    G(\theta^k + \alpha p_k) \le G(\theta^k) - \alpha \norm{p_k}^2\left(\dfrac{1}{2} - \alpha L \right),
\]
and let $0 \le \alpha \le \alpha_k' \coloneqq \dfrac{\tfrac{1}{2} - \varepsilon}{L}$, we obtain the following statement
\[
    G(\theta^k + \alpha p_k) \le G(\theta^k) - \alpha \norm{p_k}^2\varepsilon; \quad i = \overline{1, m}.
\]

In summary, with $0 \le \alpha \le \overline{\alpha}_k$, $\overline{\alpha}_k = \min\{1, \alpha_k^1, \alpha_k'\}$, we have 
\[
    G(\theta^k + \alpha p_k) \le G(\theta^k) - \alpha \norm{p_k}^2\varepsilon.
\]

Our assumption is that there exists a solution of the (MP) problem, i.e. $\norm{p_k} \not\to +\infty$ when $k \to +\infty$. Next, we prove that $\overline{\alpha}_k$ is not converge to $0$, which means for each $k$, there exists $\overline{\alpha}'$ such that for every $\alpha \in (0, \overline{\alpha}')$, we have 
\[
    G(\theta^k + \alpha p_k) \le G(\theta^k) - \alpha \norm{p_k}^2\varepsilon.
\]

Let $\overline{\alpha} = \inf_k\{\overline{\alpha}_k\}$, then for all $\alpha \in (0, \overline{\alpha})$, we have
\begin{equation*}\label{eq2}\tag{4}
    G(\theta^k + \alpha p_k) \le G(\theta^k) - \alpha \norm{p_k}^2\varepsilon, \quad \forall \;k.
\end{equation*}

Now, we prove that $\overline{\alpha} \neq 0$. On the contrary, assume $\overline{\alpha} = 0$. Based on the definition of infimum, we have $
\overline{\alpha} \le \overline{\alpha}_k, \forall k$, and $\forall \varepsilon_1 > 0, \exists k_0 \in \mathbb{N}, 
\overline{\alpha} + \varepsilon_1 > \overline{\alpha}_{k_0}
$. 

We let $\epsilon_1$ tends to $0$. Hence, there exists a subsequence $\{ \overline{\alpha}_{k_i}\}$ such that $\lim_{i \to +\infty} \overline{\alpha}_{k_i} = 0$, which mean $\lim_{i \to +\infty} \dfrac{\delta}{\norm{p_{k_i}}\left( K + \dfrac{1}{2} \norm{p_{k_i}}\right)} =0$, implies to $\norm{p_{k_i}} \to +\infty$, which is contradiction. Therefore, we have $\overline{\alpha} \neq 0$ and $\alpha_{k}$ is not converge to $0$, which means $\norm{p_k}^2 \to 0$ when $k \to +\infty$. Hence, $p_k \to \textbf{0}$ when $k \to +\infty$. \end{proof}

\begin{lemma}\label{lemma33-2}
At the $k$-th iteration, let us denote
$$\mathbb{I}(G(\theta^k + \alpha_k p_k) - G(\theta^k) + \alpha_k \varepsilon \norm{p_k}^2 > 0) = \begin{cases}
    0, & \text{ if } G(\theta^k + \alpha_k p_k) - G(\theta^k) + \alpha_k \varepsilon \norm{p_k}^2 \le 0; \\
    1, & \text{ if } G(\theta^k + \alpha_k p_k) - G(\theta^k) + \alpha_k \varepsilon \norm{p_k}^2 > 0
\end{cases}$$

Then we obtain $\displaystyle\sum_{k=0}^{+\infty} \mathbb{I}(G(\theta^k + \alpha_k p_k) - G(\theta^k) + \alpha_k \varepsilon \norm{p_k}^2 > 0) < +\infty$.
\end{lemma}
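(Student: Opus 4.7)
The plan is to argue by contradiction: suppose $\sum_{k=0}^{+\infty} \mathbb{I}(G(\theta^k + \alpha_k p_k) - G(\theta^k) + \alpha_k \varepsilon \norm{p_k}^2 > 0) = +\infty$, which is equivalent to saying that the ``else'' branch in Algorithm \ref{algo1} is executed at infinitely many iterations. From this assumption I will derive an explicit closed form for $\alpha_k$ that forces $\alpha_k \to 0$, contradicting Lemma \ref{lemma33-1}.

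The key step is to track $\alpha_k$ precisely. Let $N_k$ denote the number of ``else'' executions among iterations $0, 1, \ldots, k-1$; by construction, this is exactly the value of the counter $s$ upon entering iteration $k$. I would then prove by induction on $k$ the identity
\begin{equation*}
\alpha_k \;=\; \sigma^{N_k} \left( \alpha_0 + \sum_{\substack{0 \le k' < k \\ \text{``if'' branch at } k'}} \eta_{k'} \right).
\end{equation*}
The inductive step relies on the following observation: an ``if'' update at index $k'$ contributes the additive increment $\eta_{k'} \sigma^{N_{k'}}$, and every subsequent ``else'' step multiplies that increment by another factor of $\sigma$. Since there are exactly $N_k - N_{k'}$ ``else'' steps in the interval $(k', k)$, the cumulative factor on this contribution is $\sigma^{N_{k'}} \cdot \sigma^{N_k - N_{k'}} = \sigma^{N_k}$, which is independent of $k'$. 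The initial value $\alpha_0$ is similarly dressed by $\sigma^{N_k}$, yielding the displayed formula.

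Using $\sum_{k=0}^{+\infty} \eta_k < +\infty$ from the initialization of Algorithm \ref{algo1}, the quantity in parentheses is uniformly bounded by the finite constant $C := \alpha_0 + \sum_{k=0}^{+\infty} \eta_{k}$, so $\alpha_k \le C\, \sigma^{N_k}$. Under the contradiction hypothesis, $N_k \to +\infty$; combined with $\sigma \in (0,1)$, this forces $\alpha_k \to 0$, contradicting Lemma \ref{lemma33-1} and completing the proof. I expect the only nontrivial bookkeeping to be the closed-form identity for $\alpha_k$, where one must confirm that the mixed additive/multiplicative recursion collapses so that every past ``if'' contribution ends up carrying the same global factor $\sigma^{N_k}$---a cancellation made possible by the specific choice $\eta_k \sigma^{s}$ in the update rule.
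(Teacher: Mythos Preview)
Your proposal is correct and follows essentially the same route as the paper's proof: contradiction, an inductive formula for $\alpha_k$ of the form $\sigma^{N_k}$ times a quantity bounded by $\alpha_0+\sum_k\eta_k$, and then $N_k\to\infty$ forces $\alpha_k\to 0$, contradicting Lemma~\ref{lemma33-1}. The only difference is cosmetic: the paper proves the weaker inequality $\alpha_k\le \sigma^{\overline{k}}\bigl(\alpha_0+\sum_{i=0}^{k-1}\eta_i\bigr)$ (summing over \emph{all} $\eta_i$, not just the ``if'' indices), whereas you go for the exact closed form---your version is slightly sharper but the argument and the contradiction are the same.
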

\begin{proof}
We will prove this lemma by contradiction. Suppose $\displaystyle\sum_{k=0}^{+\infty} \mathbb{I}(G(\theta^k + \alpha_k p_k) - G(\theta^k) + \alpha_k \varepsilon \norm{p_k}^2 > 0) = +\infty$. Next, we will demonstrate that at iteration $k$, we deduce
\[
    \alpha_{k} \le \sigma^{\overline{k}}\left(\alpha_0 + \displaystyle\sum_{i = 0}^{k-1} \eta_i\right),
\]
with $\overline{k} \coloneqq |\{j \in \mathbb{N}, j \le k-1: \mathbb{I}(G(\theta^j + \alpha_j p_j) - G(\theta^j) + \alpha_j \varepsilon \norm{p_j}^2 > 0) = 1\}|$. We will prove this statement by induction. The statement satisfies when $k = 0,1$. Let assume the correctness until $k = l$, i.e, $\alpha_{l} \le \sigma^{\overline{l}}\left(\alpha_0 + \displaystyle\sum_{i = 0}^{l-1} \eta_i\right)$. Now we will prove in case $k = l+1$. Let's consider the two following cases:
\begin{enumerate}
    \item $\mathbb{I}(G(\theta^l + \alpha_l p_l) - G(\theta^l) + \alpha_l \varepsilon \norm{p_l}^2 > 0) = 0$. In this case, we obtain 
     \begin{align*}
\alpha_{l+1} &= \alpha_l + \eta_l \sigma^{\overline{l}} \\
&\le \sigma^{\overline{l}} \left( \alpha_0 + \sum_{i=0}^{l-1} \eta_i \right) +  \eta_l \sigma^{\overline{l}} \\
&\le \sigma^{\overline{l+1}} \left( \alpha_0 + \sum_{i=0}^{l} \eta_i \right). 
\end{align*}
    \item $\mathbb{I}(G(\theta^l + \alpha_l p_l) - G(\theta^l) + \alpha_l \varepsilon \norm{p_l}^2 > 0) = 1$. Similarly, we deduce
    \begin{align*}
        \alpha_{l+1} &= \sigma \alpha_l \\
                    &\le \sigma \sigma^{\overline{l}} \left( \alpha_0 + \sum_{i=0}^{l-1} \eta_i \right)  \\
                    &\le \sigma^{\overline{l+1}}  \left( \alpha_0 + \sum_{i=0}^{l} \eta_i \right).
    \end{align*}    

\end{enumerate}

Hence, we have $\alpha_{k} \le \sigma^{\overline{k}}\left(\alpha_0 + \displaystyle\sum_{i = 0}^{k-1} \eta_i\right)$ for all $k$. By the assumption $\displaystyle\sum_{k=0}^{+\infty} \mathbb{I}(G(\theta^k + \alpha_k p_k) - G(\theta^k) + \alpha_k \varepsilon \norm{p_k}^2 > 0) = +\infty$, we imply $\sigma^{\overline{k}} \to 0$ and the sum $\alpha_0 
 + \displaystyle\sum_{i = 0}^{k-1} \eta_i$ is bounded, which results in $\alpha_{k} \to 0$ when $k \to +\infty$, in contrast to $\alpha_k \not\to 0$, hence we have $\displaystyle\sum_{k=0}^{+\infty} \mathbb{I}(G(\theta^k + \alpha_k p_k) - G(\theta^k) + \alpha_k \varepsilon \norm{p_k}^2 > 0) < +\infty$ and the procedure ensures that $p_k \to 0, \alpha_k \not\to 0$ when $k \to +\infty$. \end{proof}

\begin{lemma}\label{33-3}
    Any limit point $\theta^*$ of the sequence $\{\theta_k\}$ generated by the Algorithm \ref{algo1}, satisfies the necessary conditions for a minimum of $G(\theta)$ with $\theta \in \R^n$. 
\end{lemma}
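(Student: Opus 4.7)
The plan is to exploit the KKT conditions of the subsidiary quadratic program \eqref{sub_prob} together with the fact, established in Lemma \ref{lemma33-1}, that $p_k \to 0$. Let $\theta^*$ be any limit point and choose a subsequence $\{\theta^{k_j}\}$ converging to $\theta^*$. Since every $J_\delta(\theta^{k_j})$ is a subset of $\{1,\dots,m\}$, only finitely many such index sets occur, so after passing to a further subsequence I may assume $J_\delta(\theta^{k_j}) = \bar J$ for all $j$, where $\bar J$ is a fixed index set.

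The next step is to write the KKT system for \eqref{sub_prob} viewed as a QP in the pair $(p,\beta)$. Stationarity in $\beta$ yields $\sum_{i \in \bar J} \lambda_i^{k_j} = 1$, and stationarity in $p$ yields $p_{k_j} = -\sum_{i \in \bar J} \lambda_i^{k_j} \nabla g_i(\theta^{k_j})$, with multipliers $\lambda_i^{k_j} \ge 0$ lying in the standard simplex. Compactness of the simplex (and of the gradients over the bounded set $\Omega$) allows a further extraction along which $\lambda_i^{k_j} \to \lambda_i^*$ and $\beta_{k_j} \to \beta^*$. Using $p_{k_j} \to 0$ and continuity of $\nabla g_i$, the stationarity relation passes to the limit as
\begin{equation*}
    0 \;=\; \sum_{i \in \bar J} \lambda_i^* \, \nabla g_i(\theta^*), \qquad \sum_{i \in \bar J} \lambda_i^* = 1, \qquad \lambda_i^* \ge 0.
\end{equation*}

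The main technical step, and what I expect to be the principal obstacle, is to upgrade the support of $\lambda^*$ from $\bar J$ to the true active set $J(\theta^*) = \{i : g_i(\theta^*) = G(\theta^*)\}$, since $\bar J$ is a priori only the $\delta$-active set. For this I would combine three ingredients. First, the primal bound $\beta_{k_j} \le G(\theta^{k_j}) - \tfrac{1}{2}\|p_{k_j}\|^2$ used inside the proof of Lemma \ref{lemma33-1} gives, after taking limits, $\beta^* \le G(\theta^*)$. Second, the constraint $\langle \nabla g_i(\theta^{k_j}), p_{k_j}\rangle + g_i(\theta^{k_j}) \le \beta_{k_j}$ for $i \in \bar J$ yields $g_i(\theta^*) \le \beta^*$ in the limit; together with the inclusion $J(\theta^*) \subseteq \bar J$ (which follows from continuity: if $g_i(\theta^*)=G(\theta^*)$ then eventually $g_i(\theta^{k_j}) \ge G(\theta^{k_j})-\delta$), this forces $\beta^* = G(\theta^*)$. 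Third, complementary slackness $\lambda_i^{k_j}\bigl(\langle \nabla g_i(\theta^{k_j}), p_{k_j}\rangle + g_i(\theta^{k_j}) - \beta_{k_j}\bigr)=0$ passes to the limit to give $\lambda_i^*(g_i(\theta^*)-G(\theta^*))=0$, so any $i$ with $\lambda_i^*>0$ belongs to $J(\theta^*)$.

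Putting these pieces together yields $0 \in \mathrm{Conv}\{\nabla g_i(\theta^*) : i \in J(\theta^*)\}$, which by Proposition \ref{smooth_clarke} and Theorem \ref{semismooth}(i) applied to the (continuously differentiable, hence locally Lipschitz) functions $g_i$ is precisely the statement $0 \in \partial^C G(\theta^*)$. This is the standard necessary optimality condition for the nonsmooth nonconvex minimax problem \eqref{MP}, completing the proof.
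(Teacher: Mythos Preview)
Your proposal is correct and follows essentially the same route as the paper: write the KKT system for \eqref{sub_prob}, use $p_k\to 0$ from Lemma~\ref{lemma33-1}, sandwich $\beta_k$ between $G(\theta^k)-K\|p_k\|$ and $G(\theta^k)-\tfrac12\|p_k\|^2$ to force $\beta_k\to G(\theta^*)$, and pass the stationarity and complementary-slackness relations to the limit. Your version is in fact more careful than the paper's---you explicitly extract a subsequence with fixed $\bar J$, invoke compactness of the simplex for the multipliers, verify $J(\theta^*)\subseteq\bar J$, and spell out the final identification with $0\in\partial^C G(\theta^*)$ via Proposition~\ref{smooth_clarke} and Theorem~\ref{semismooth}---whereas the paper simply states the three limiting conditions without these details.
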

\begin{proof}
Consider the subsidiary problem \eqref{sub_prob}, let $i \in J_{\delta}(\theta^k)$, so if $i \notin J_{\delta}(\theta^k)$, we assign $u_k^i = 0$, then the necessary and sufficient condition for optimal solution of \eqref{sub_prob} is displayed as follows:

\[\begin{cases}
    \displaystyle\sum_{i =1}^m u_k^i = 1, \\
    p(\theta^k) + \displaystyle\sum_{i =1}^m u_k^i \nabla g_i(\theta^k) = 0,\\
    u^i_k(\langle \nabla g_i(\theta^k), p_k \rangle + g_i(\theta^k) - \beta(\theta^k)) = 0.
\end{cases}\]

For $i \in J_{\delta}(\theta^k)$ and $g_i(\theta^k) = G(\theta^k)$, we have
\[
\beta_k \ge g_i(\theta^k) - \langle \nabla g_i(\theta^k), p(\theta^k) \rangle \ge G(\theta^k) - K \norm{p_k},
\]
and $\beta_k \le G(\theta^k) - \dfrac{1}{2} \norm{p_k}^2$, so for each accumulation point $\theta^*$ generated from sequence $\{\theta^k\}$, we have $\beta_k \to G(\theta^*)$ when $k \to +\infty$. Then, the accumulation point $\theta^*$ satisfies some following conditions:
\[\begin{cases}
    \displaystyle\sum_{i =1}^m \overline{u}^i = 1, \overline{u}^i \ge 0; \label{Theorem_1} \\[0.7cm]
    \displaystyle\sum_{i =1}^m \overline{u}^i \nabla g_i(\theta^*) = 0,\\[0.7cm]
    \overline{u}^i(g_i(\theta^*) - G(\theta^*)) = 0, i = \overline{1, m}.
\end{cases}\] \end{proof}

\begin{theorem} \label{lemma33-4}
In the non-convex case, i.e. $g_i(\theta), i = \overline{1, m}$ are non-convex, then the accumulation point (if it exists) $\theta^*$ is a stationary point.
\end{theorem}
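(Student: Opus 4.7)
The plan is to lean on Lemma \ref{33-3} as the workhorse and to identify the first-order conditions it produces with the definition of a stationary point for the minimax problem \eqref{MP}. Since Lemmas \ref{lemma33-1} and \ref{lemma33-2} already yield $p_k \to 0$ and $\alpha_k \not\to 0$, all the convergence machinery is in place; Lemma \ref{33-3} then packages this into the multiplier conditions $\sum_{i=1}^m \overline{u}^i = 1$ with $\overline{u}^i \geq 0$, the stationarity identity $\sum_{i=1}^m \overline{u}^i \nabla g_i(\theta^*) = 0$, and the complementary slackness relations $\overline{u}^i (g_i(\theta^*) - G(\theta^*)) = 0$ for $i = \overline{1,m}$.

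From here I would proceed in two short steps. First, I would invoke complementary slackness to conclude that the support of $\overline{u}$ is contained in the active index set $J(\theta^*) = \{i : g_i(\theta^*) = G(\theta^*)\}$, so the identity $\sum_{i=1}^m \overline{u}^i \nabla g_i(\theta^*) = 0$ reduces to $0 \in \text{Conv}\{\nabla g_i(\theta^*) : i \in J(\theta^*)\}$. Second, I would observe that because each $g_i$ is continuously differentiable, Proposition \ref{smooth_clarke} gives $\partial^C g_i(\theta^*) = \{\nabla g_i(\theta^*)\}$, and Theorem \ref{semismooth}(i) then supplies the inclusion $\partial^C G(\theta^*) \subseteq \text{Conv}\{\nabla g_i(\theta^*) : i \in J(\theta^*)\}$. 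The relation $0 \in \text{Conv}\{\nabla g_i(\theta^*) : i \in J(\theta^*)\}$ is precisely the Fritz John/KKT-type first-order necessary condition for \eqref{MP}, which is the accepted notion of a stationary point in the nonconvex minimax setting.

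The main obstacle here is conceptual rather than technical: in the fully nonconvex case Theorem \ref{semismooth}(i) only delivers one inclusion, so one cannot in general pass from $0 \in \text{Conv}\{\nabla g_i(\theta^*) : i \in J(\theta^*)\}$ to the stronger Clarke condition $0 \in \partial^C G(\theta^*)$. I would therefore make it explicit that by \emph{stationary point} we mean a point admitting multipliers as in Lemma \ref{33-3}, which is the standard definition for nonconvex finite minimax problems and does not require the reverse inclusion. With that interpretation settled, Theorem \ref{lemma33-4} follows at once from Lemma \ref{33-3}.
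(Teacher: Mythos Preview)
Your proposal is sound and is, in fact, cleaner than what the paper does. The paper's own proof does \emph{not} invoke Lemma~\ref{33-3} or the multiplier system at all; instead it argues at the level of the iterates, using $\alpha_k$ bounded and $p_k\to 0$ to claim that $\|\theta^{k+1}-\theta^k\|\to 0$ and that the sequence is Cauchy, and then declares $\theta^*$ a ``stationary point'' on that basis. In other words, the paper treats stationarity as a statement about the iteration stalling, whereas you treat it as the Fritz John/KKT first-order condition $0\in\text{Conv}\{\nabla g_i(\theta^*):i\in J(\theta^*)\}$. Your route has two advantages: it makes the meaning of ``stationary point'' explicit and standard, and it follows immediately from Lemma~\ref{33-3} without any additional convergence argument. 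The paper's route, by contrast, leaves the notion of stationarity implicit and contains a questionable step (passing from $\|\theta^{k+1}-\theta^k\|\to 0$ to the sequence being Cauchy is not automatic). Your careful remark that in the general nonconvex case Theorem~\ref{semismooth}(i) only gives one inclusion, so one should not claim $0\in\partial^C G(\theta^*)$ but rather the convex-hull condition, is a correct and useful clarification that the paper does not make.
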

\begin{proof}
Assume $\theta^*$ exists be accumulation point. Based on the algorithm, $\theta^{k+1} = \theta^k + \alpha_k p_k$, where $\alpha_k$ is bounded and $p_k \to 0$ when $k \to +\infty$. We consider $\forall \varepsilon > 0,$ there exist $N > 0$ such that $\forall m, k  \ge N, m > k$, we have $\norm{\theta^m - \theta^k} < \varepsilon$. Let $m \to +\infty$, then we have
\begin{equation*}
    \norm{\theta^* - \theta^k} < \varepsilon.
\end{equation*}

Then we have
\begin{equation*}
    0 \le \norm{\theta^{k+1} - \theta^k} \le \norm{\theta^{k+1} - \theta^*} + \norm{\theta^* - \theta^k} < 2\varepsilon.
\end{equation*}

Let $\varepsilon \to 0^+$ leads to $k \to +\infty$ that we have $\theta^*$ is stationary point. \end{proof}

\begin{theorem}\label{lemma33-5}
In the quasi-convex case, i.e. $g_i(\theta), i = \overline{1, m}$ are quasiconvex, then the accumulation point exists and is a stationary point.
\end{theorem}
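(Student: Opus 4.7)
The plan is in three stages: establish that an accumulation point of $\{\theta^k\}$ exists (the new content versus Theorem \ref{lemma33-4}, where existence was only assumed), apply Lemma \ref{33-3} at the limit, and translate the resulting multiplier relation into stationarity of \eqref{MP}.

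For \emph{existence}, Lemma \ref{lemma33-2} shows that only finitely many iterations $k$ fail the descent test $G(\theta^{k+1}) \le G(\theta^k) - \alpha_k \varepsilon \norm{p_k}^2$; fix $k_0$ past the last such index. For $k \ge k_0$ the tail $\{G(\theta^k)\}$ is nonincreasing. Proposition \ref{boyy} says $G = \max_i g_i$ is quasi-convex, so every sublevel set of $G$ is convex. I would then combine Assumption (A2) with the uniform upper bound $\alpha_k \le \alpha_0 + \sum_j \eta_j$ on the step-sizes and the boundedness of $\norm{p_k}$ (from $p_k \to 0$, Lemma \ref{lemma33-1}), together with a Lipschitz estimate from (A3), to control the total overshoot of $G$ across the finitely many ``bad'' iterations. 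Consequently the tail $\{\theta^k\}_{k \ge k_0}$ lies in a bounded, convex sublevel set of $G$, and Bolzano--Weierstrass produces a subsequence $\theta^{k_j} \to \theta^*$.

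For \emph{stationarity}, Lemma \ref{33-3} applied at $\theta^*$ yields multipliers $\bar u^i \ge 0$ with $\sum_i \bar u^i = 1$, $\sum_i \bar u^i \nabla g_i(\theta^*) = 0$, and $\bar u^i(g_i(\theta^*) - G(\theta^*)) = 0$. By Proposition \ref{smooth_clarke} each $g_i$ is quasi-differentiable with $\partial^C g_i(\theta^*) = \{\nabla g_i(\theta^*)\}$; Proposition \ref{prop41} supplies the $\partial^C$-quasi-convex property for every quasi-convex $g_i$; and Theorem \ref{semismooth}(i) gives $\partial^C G(\theta^*) \subset \mathrm{Conv}\{\nabla g_i(\theta^*) : i \in J(\theta^*)\}$. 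The multiplier relation thus places $0$ in this convex hull, which together with the complementarity conditions is the Pshenichny-type first-order necessary condition characterising $\theta^*$ as a stationary point of \eqref{MP}.

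The \emph{main obstacle} I anticipate is the existence step. The non-monotone acceptance rule lets $G$ rise during finitely many early iterations, so the iterates may temporarily leave the initial sublevel set $\Omega = \{\theta : G(\theta) \le G(\theta^0)\}$ that (A2) guarantees is bounded; a careful quantitative bound on that overshoot---via (A3), the uniform upper bound on $\alpha_k$, and boundedness of $\norm{p_k}$---combined with the quasi-convexity of $G$ is what is needed to confine the tail to a bounded set and deliver the convergent subsequence.
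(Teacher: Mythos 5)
The stationarity half of your plan is fine and matches what the paper ultimately relies on (Lemma \ref{33-3} giving the multiplier system $\sum_i \bar u^i \nabla g_i(\theta^*)=0$, etc.), but the existence half --- which is exactly the new content of this theorem compared with Theorem \ref{lemma33-4} --- has a genuine gap that you flag but do not close, and it is not closable along the route you sketch. Eventual monotonicity of $G(\theta^k)$ from Lemma \ref{lemma33-2} only confines the tail to the sublevel set $\{\theta : G(\theta)\le G(\theta^{k_0})\}$, and if the finitely many non-monotone steps have pushed the value above $G(\theta^0)$, Assumption (A2) says nothing about this larger set: for a merely quasiconvex $G$, boundedness of one sublevel set does not propagate upward (take $G(x)=\min\{\abs{x},1\}$ on $\R$: $\{G\le 1/2\}$ is bounded while $\{G\le 1\}=\R$). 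So ``controlling the total overshoot of $G$'' via (A3), the step-size bound and $\norm{p_k}\to 0$ bounds function values, not iterates, and convexity of the sublevel sets (your use of Proposition \ref{boyy}) does not supply boundedness either. In short, quasiconvexity is not being used where it is actually needed.

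The paper closes exactly this hole by a quasi-Fej\'er argument rather than a sublevel-set argument: writing $-p_k=\sum_{i\in J_\delta(\theta^k)}u^i\nabla g_i(\theta^k)\in\partial^C G(\theta^k)$ (Proposition \ref{smooth_clarke}, Theorem \ref{semismooth}) and invoking the $\partial^C$-quasiconvexity of $G$ (Proposition \ref{prop41}) at points $\hat\theta$ of the set $U=\{\theta\in\Omega: G(\theta)\le G(\theta^k)\ \forall k\}$ gives $\langle -p_k,\hat\theta-\theta^k\rangle\le 0$, which turns the expansion of $\norm{\theta^{k+1}-\hat\theta}^2$ into the recursion handled by Lemma \ref{sequence_lem} (with the summability of the error terms coming from Lemma \ref{lemma33-2}). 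This yields convergence of $\norm{\theta^k-\hat\theta}$ for every $\hat\theta\in U$, hence boundedness of $\{\theta^k\}$, a convergent subsequence, and in fact convergence of the whole sequence to a stationary point --- a stronger conclusion than the mere existence of an accumulation point you aim for. Note also that in your write-up Proposition \ref{prop41} appears only in the stationarity step, where it is not needed (Lemma \ref{33-3} alone suffices); in the paper it is the key ingredient of the boundedness/convergence step. If you want to salvage your own route, you must either prove that the iterates never leave a bounded set despite the non-monotone acceptance rule, or adopt the Fej\'er-type distance argument; as written, the existence claim is unsupported.
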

\begin{proof} We observe that 
\begin{align*}
\alpha_k \varepsilon \norm{p(\theta_k)}^2 &= \varepsilon \langle p(\theta_k), \alpha_k p(\theta_k) \rangle \\
&= \varepsilon \langle -p(\theta^k), \theta_k - \theta_{k+1} \rangle \\
&= \varepsilon \left\langle \sum_{i \in J_{\delta}(\theta^k)} u^i \nabla g_i(\theta^k), \theta^k - \theta^{k+1} \right\rangle \\
&= \varepsilon \sum_{i \in J_{\delta}(\theta^k)} u^i \left\langle \nabla g_i(\theta^k), \theta^k - \theta^{k+1} \right\rangle 
\end{align*}

Noting that \(\displaystyle\sum_{i \in J_{\delta}(\theta^k)} u^i \left\langle \nabla g_i(\theta^k), \theta^k - \theta^{k+1} \right\rangle \geq 0\). By using Lemma \ref{lemma33-2} and based on the  assumption about the existence of the solution of the Problem (MP), we infer that
\begin{equation*}
    \sum_{k=0}^{+\infty} \sum_{i \in J_{\delta}(\theta^k)} u^i \left\langle \nabla g_i(\theta^k), \theta^k - \theta^{k+1} \right\rangle < +\infty.
\end{equation*}

For all \(w \in \R^n\), we have
$$
\begin{aligned}
\left\|\theta^{k+1} - w\right\|^2 &= \left\|\theta^k - w\right\|^2 - \left\|\theta^{k+1} - \theta^k\right\|^2 + 2 \left\langle \theta^{k+1} - \theta^k, \theta^{k+1} - w \right\rangle \\
&\leq \left\|\theta^k - w\right\|^2 - \left\|\theta^{k+1} - \theta^k\right\|^2 + 2 \alpha_k \sum_{i \in J_{\delta}(\theta_k)} u^i \left\langle \nabla g_i(\theta^k), w - \theta^{k+1} \right\rangle.
\end{aligned}
$$

Now, suppose that \(g_i\), \(i = \overline{1, m}\) is quasiconvex on \(\Omega\). Using Proposition \ref{boyy}, we deduce \(G\) is quasiconvex. Denote
\[
U := \left\{\theta \in \Omega : G(\theta) \leq G\left(\theta^k \right) \forall k \ge 0\right\},
\]
and
\[
\Omega := \left\{\theta \in \R^n : G(\theta) \leq G\left(\theta^0 \right)\right\}.
\]

Remark that the solution of \eqref{MP} lies in \(U \subseteq \Omega\), so the cardinality of \(U\) is assumed to be non-empty. We have assumed that \(\Omega\) is bounded, then \(U\) is bounded. Let \(\{\theta^l\} \in U\) be a convergent sequence to \(\overline{\theta}\), then \(G(\theta^l) \le G(\theta^k) \;\forall k \ge 0\), so we have \(\lim_{l \to +\infty} G(\theta^l) = G(\lim_{l \to +\infty} \theta^l) = G(\overline{\theta}) \le G(\theta^k), \forall k \ge 0\), because \(G\) is a continuous function. Thus, \(\overline{\theta} \in U\) and \(U\) is closed. In summary, \(U\) is compact. Take \(\hat{\theta} \in U\). Since \(G\left(\theta^k\right) \geq G(\hat{\theta})\) \(\forall k \ge 0\). We consider the following two cases: 
%

 If \(G(\hat{\theta}) = G(\theta^k)\) for some \(k \ge 0\), then for \(l > k\), we also have \(G(\hat{\theta}) = G(\theta^l)\), so \(\hat{\theta}\) is an accumulation point and it is a stationary point. 
    
    If \(G(\hat{\theta}) < G(\theta^k)\), using Proposition \ref{smooth_clarke} and Theorem \ref{semismooth}, we deduce \(-p_k = \displaystyle\sum_{i \in J_{\delta}(\theta^k)} u^i \nabla g_i\left(\theta^k\right) \in \partial^C G(\theta^k)\). After that, using Proposition \ref{prop41}, it implies that
\[
\left\langle -p_k, \hat{\theta} - \theta^k \right\rangle \leq 0 \quad \forall k \geq 0,
\]
then we have 
\[
\sum_{i \in J_{\delta}(\theta^k)} u^i \left\langle \nabla g_i\left(\theta^k\right), \hat{\theta} - \theta^k \right\rangle \leq 0 \quad \forall k \geq 0,
\]
\[
\begin{aligned}
\left\|\theta^{k+1} - \hat{\theta}\right\|^2 & \leq \left\|\theta^k - \hat{\theta}\right\|^2 - \left\|\theta^{k+1} - \theta^k\right\|^2 + 2 \alpha_k \sum_{i \in J_{\delta}(\theta^k)} u^i \left\langle \nabla g_i\left(\theta^k\right), \theta^k - \theta^{k+1} \right\rangle.
\end{aligned}
\]

Using Lemma \ref{sequence_lem}, when \(u_k = \left\|\theta^{k+1} - \hat{\theta}\right\|^2\), \(v_k = \displaystyle\sum_{i \in J_{\delta}(\theta^k)} u^i \left\langle \nabla g_i\left(\theta^k\right), \theta^k - \theta^{k+1} \right\rangle\), we imply that \(\left\{\left\|\theta^k - \hat{\theta}\right\|\right\}\) converges for all \(\hat{\theta} \in U\). Because \(\left\{\theta^k\right\}\) is bounded, there exists a subsequence \(\left\{\theta^{k_j}\right\} \subset \left\{\theta^k\right\}\) such that \(\displaystyle\lim_{i \rightarrow +\infty} \theta^{k_i} = \bar{\theta} \in U\). Thus,
\[
\displaystyle\lim_{k \rightarrow +\infty} \left\|\theta^k - \bar{\theta}\right\| = \lim_{i \rightarrow +\infty} \left\|\theta^{k_i} - \bar{\theta}\right\| = 0.
\]

Remark that each limit point of \(\left\{\theta^k\right\}\) is a stationary point of the problem. Hence, the sequence \(\left\{\theta^k\right\}\) converges to \(\bar{\theta}\) - a stationary point of the Problem \eqref{MP}. 

\end{proof}

\begin{theorem}\label{lemma33-6}
   In the pseudo-convex case, i.e. $g_i(\theta)$ are pseudoconvex, then $\theta^*$ is the global minimum point of the problem.
\end{theorem}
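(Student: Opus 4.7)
The plan is to build on Theorem \ref{lemma33-5} and then strengthen the stationarity conclusion using the componentwise pseudoconvex inequality. First, recall that a differentiable pseudoconvex function is quasiconvex (this is a standard implication and is also consistent with Proposition \ref{pseudo_semi} combined with the semiconvexity machinery already available in the paper), so the hypotheses of Theorem \ref{lemma33-5} are in force. Consequently an accumulation point $\theta^*$ exists and is a stationary point, and Lemma \ref{33-3} supplies multipliers $\overline{u}^i \ge 0$ with $\sum_{i=1}^{m} \overline{u}^i = 1$, $\sum_{i=1}^{m} \overline{u}^i \nabla g_i(\theta^*) = 0$, and $\overline{u}^i(g_i(\theta^*) - G(\theta^*)) = 0$. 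The last identity forces $\{i : \overline{u}^i > 0\} \subseteq J(\theta^*) := \{i : g_i(\theta^*) = G(\theta^*)\}$.

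Next I argue by contradiction. Suppose $\theta^*$ is not a global minimizer of $G$, so that there exists $\hat{\theta} \in \R^n$ with $G(\hat{\theta}) < G(\theta^*)$. For each index $i$ with $\overline{u}^i > 0$ we have $i \in J(\theta^*)$, hence
\begin{equation*}
g_i(\hat{\theta}) \le G(\hat{\theta}) < G(\theta^*) = g_i(\theta^*).
\end{equation*}
Applying the pseudoconvex definition to $g_i$, and using that $\partial^{\uparrow} g_i(\theta^*) = \{\nabla g_i(\theta^*)\}$ in the continuously differentiable setting (in parallel with Proposition \ref{smooth_clarke}), we obtain
\begin{equation*}
\langle \nabla g_i(\theta^*), \hat{\theta} - \theta^* \rangle < 0 \quad \text{for every } i \text{ with } \overline{u}^i > 0.
\end{equation*}

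Finally, since $\sum_{i=1}^{m} \overline{u}^i = 1$ at least one index $i_0$ satisfies $\overline{u}^{i_0} > 0$, so the convex combination preserves the strict sign:
\begin{equation*}
\sum_{i=1}^{m} \overline{u}^i \langle \nabla g_i(\theta^*), \hat{\theta} - \theta^* \rangle < 0.
\end{equation*}
On the other hand, the stationarity identity $\sum_{i=1}^{m} \overline{u}^i \nabla g_i(\theta^*) = 0$ forces the same sum to vanish, a contradiction. Hence $G(\theta^*) \le G(\hat{\theta})$ for every $\hat{\theta} \in \R^n$, so $\theta^*$ is a global minimizer of \eqref{MP}. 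The main obstacle I anticipate is the preliminary reduction step, namely confirming the passage from pseudoconvex components to the quasiconvex hypothesis needed by Theorem \ref{lemma33-5} and the identification of the generalized subdifferential in the paper's pseudoconvex definition with the single gradient in the smooth case; once that is in place, the closing argument is a routine KKT-style observation that a nontrivial convex combination of strictly negative numbers is strictly negative.
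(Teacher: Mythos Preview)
Your argument is correct, but it takes a genuinely different route from the paper's own proof. The paper does not argue componentwise by contradiction; instead it passes to the max function $G$ itself. Using Proposition~\ref{pseudo_semi}, Proposition~\ref{smooth_clarke} and Theorem~\ref{semismooth}, the paper shows that $G$ is semiconvex and quasidifferentiable with $\partial^C G(\theta^*)=\mathrm{Conv}\{\nabla g_i(\theta^*):i\in J(\theta^*)\}$, notes that $0=\sum_i\overline{u}^i\nabla g_i(\theta^*)\in\partial^C G(\theta^*)$, deduces $G^0(\theta^*,d)=G'(\theta^*,d)\ge 0$ for every direction $d$, and then invokes the definition of semiconvexity to obtain $G(\theta^*+d)\ge G(\theta^*)$ for all admissible $d$. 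Your approach avoids the entire semiconvexity and Clarke-subdifferential layer for the closing step: once the KKT multipliers from Lemma~\ref{33-3} are in hand, you exploit the pseudoconvex inequality for each active $g_i$ directly and derive the contradiction $0<0$. This is more elementary and yields the global statement on all of $\R^n$ in one stroke, whereas the paper's argument passes through $\Omega$. The paper's route, in return, shows that the aggregate function $G$ inherits a structural property (semiconvexity) from the components, which is of independent interest. Your anticipated obstacles are harmless: pseudoconvex differentiable functions are quasiconvex (so Theorem~\ref{lemma33-5} applies), and in the smooth setting the generalized subdifferential in the pseudoconvex definition reduces to $\{\nabla g_i(\theta^*)\}$ exactly as in Proposition~\ref{smooth_clarke}.
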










\begin{proof}
Consider $\delta$ is small enough and let \(J(\theta^*) = \{ i = \overline{1, m}, g_i(\theta^*) =  G(\theta^*)\}\), \(\overline{u}^i \ge 0, i = \overline{1, m}\), then for \(i \notin J(\theta^*)\), we have \(\overline{u}^i = 0\) and the following statements are obtained

\[\begin{cases}
    \sum_{i \in J(\theta^*)} \overline{u}^i = 1, \\[0.7cm]
    \sum_{i \in J(\theta^*)} \overline{u}^i \nabla g_i(\theta^*) = 0,\\[0.7cm]
    \overline{u}^i (g_i(\theta^*) - G(\theta^*)) = 0, \; i = \overline{1, m}.
\end{cases}\]

Using Proposition \ref{pseudo_semi}, for \(g_i\), \(i = \overline{1, m}\) are differentiable pseudoconvex functions, we obtain \(g_i\), \(i = \overline{1, m}\) are semiconvex functions.

Using Proposition \ref{smooth_clarke}, for \(g_i(\theta)\), \(i = \overline{1, m}\) are locally Lipschitz, then the Clarke differential of each \(g_i\) is \(\partial^C g_i(\theta^*) = \{ \nabla g_i(\theta^*)\}\).

For \(g_i\), \(i = \overline{1, m}\) are locally Lipschitz functions, using Theorem \ref{semismooth}, we have
\[
    \partial^C G(\theta^*) = \text{Conv} \{ \partial g_i(\theta^*), i \in J(\theta^*) \}.
\]

It is noticeable that \(0 = \sum_{i =1}^m \overline{u}^i \nabla g_i(\theta^*) \in \text{Conv} \{ \partial^C g_i(\theta^*), i \in J(\theta^*) \} = \partial^C G(\theta^*)\).

Using Theorem \ref{semismooth}, when \(G(\theta)\) is a semiconvex function, then it is quasidifferentiable at \(\theta\). Because \(G\) is quasidifferentiable at \(\theta\), based on the definition, \(G'(\theta, d)\) exists for all \(d \in \R^n\) and \(G'(\theta, d) = G^0(\theta, d)\).

Then, following the definition of \(\partial^C G(\theta^*)\), we have \(0 \in \partial^C G(\theta^*)\) which leads to \(G^0(\theta^*, d) \ge 0\) for all \(d \in \R^n\). Since \(G(\theta)\) is semiconvex at \(\theta\), \(G(\theta)\) is quasidifferentiable at \(\theta\) and \(0 \le G^0(\theta^*, d) = G'(\theta^*, d) = \displaystyle\lim_{t \to 0^+} \frac{G(\theta^* + td) - G(\theta^*)}{t}\) for all \(d \in \R^n\), then \(G(\theta^* + td) \ge G(\theta^*)\) for all \(d \in \R^n\) implies that \(\theta^*\) is a local minimum of the Problem \eqref{MP}.

It is noticeable that because \(\Omega = \{\theta \in \R^n : G(\theta) \le G(\theta_0)\}\), if the Problem (MP) has a global minimum, then the global solution is in \(\Omega\). Besides, consider all \(d \in \R^n\) such that \(\theta^* + d \in \Omega\), then due to \(G\) is semiconvex at \(\theta^*\), based on the definition, \(G'(\theta^*, d) \ge 0\), hence \(G(\theta^* + d) \ge G(\theta^*)\). Thus, for each \(\theta \in \Omega\), there exists \(d \in \R^n\) such that \(\theta = \theta^* + d\) and \(G(\theta) \ge G(\theta^*)\), \(\forall \theta \in \Omega\), and \(\theta^*\) is the global minimum of the Problem \eqref{MP}. \end{proof}   
\section{Multiobjective Optimization Problem}
\subsection{Notations}
We are examining the multicriteria optimization problem as described below:
\begin{equation}
\min _{\theta \in \mathcal{X}} g(\theta)=\left(g_1(\theta), g_2(\theta), \ldots g_m(\theta)\right)^{\top}, \tag{MOP} \label{MOP}
\end{equation}
in there, $\mathcal{X} \subseteq \mathbb{R}^n \neq \emptyset$ is called the feasible set, and $g_i: \mathbb{R}^n \rightarrow \mathbb{R}$, $i = \overline{1, m}, m \ge 2$ represent the $m$ objective functions. The set $\mathcal{Y} = \{g(\theta) \in \mathbb{R}^m : \theta \in \mathcal{X}\}$ represents the (feasible) objective set. The spaces $\mathbb{R}^n$ and $\mathbb{R}^m$ are described as the decision and objective spaces, respectively.

To compare the values of objective functions, we mention some relations with regard to cone order (Ehrgott, 2005).

\begin{definition}{\cite{dachert2017efficient}}
For two objective vectors $\xi^1$ and $\xi^2$ in the objective space $\mathcal{Y} \subseteq \mathbb{R}^m$, we define:
\begin{itemize}
\item $\xi^1 \leqq \xi^2$ (meaning $\xi^1$ weakly dominates $\xi^2$) if and only if $\xi_i^1 \leq \xi_i^2$ for every $i = 1, 2, \ldots, m$.
\item $\xi^1 \leq \xi^2$ (meaning $\xi^1$ dominates $\xi^2$) if and only if $\xi^1 \leqq \xi^2$ and $\xi^1 \neq \xi^2$.
\item $\xi^1 < \xi^2$ (meaning $\xi^1$ strictly dominates $\xi^2$) if and only if $\xi_i^1 < \xi_i^2$ for every $i = 1, 2, \ldots, m$.
\end{itemize}
\end{definition}

When neither $\xi^1 \geq \xi^2$ nor $\xi^2 \geq \xi^1$, $\xi^1$ and $\xi^2$ are considered incomparable.

We now introduce the concept of dominance relations for decision vectors.

\begin{definition}{\cite{audet2008multiobjective}}
For two decision vectors $\theta^1$ and $\theta^2$ in the feasible set $\mathcal{X} \subseteq \mathbb{R}^n$, we define:
\begin{itemize}
\item $\theta^1 \preceq \theta^2$ (meaning $\theta^1$ weakly dominates $\theta^2$) if and only if $g(\theta^1) \leqq g(\theta^2)$.
\item $\theta^1 \prec \theta^2$ (meaning $\theta^1$ dominates $\theta^2$) if and only if $g(\theta^1) \leq g(\theta^2)$ and $g(\theta^1) \neq g(\theta^2)$.
\item $\theta^1 | \theta^2$ (meaning $\theta^1$ and $\theta^2$ are incomparable) if neither $\theta^1$ weakly dominates $\theta^2$ nor $\theta^2$ weakly dominates $\theta^1$.
\end{itemize}
\end{definition}

With these relations, we define the concept of a solution in the multi-objective optimization framework.

\begin{definition}{\cite{ehrgott2005multicriteria}}
A vector $\theta \in \mathcal{X}$ is considered a Pareto-optimal solution if no other vector in $\mathcal{X}$ can dominate it. The collection of all Pareto-optimal solutions is referred to as the Pareto set, represented by $\mathcal{X}_P$, and the corresponding set of objective values is known as the Pareto front, denoted by $\mathcal{Y}_P$.
\end{definition}

\begin{definition}{\cite{zhao2021convergence}}
    A point $\theta^*$ is a Pareto stationary point (or a Pareto critical point) of $g$ if 
\[
J_g(\theta^*)(\R^n - \theta^*) \cap (-\R^{m}_{++}) = \emptyset,
\]

where the Jacobian matrix of $g$ at $\theta^*$, i.e $J_g(\theta^*)$ is given by $J_g(\theta) = (\nabla g_1(\theta), \ldots, \nabla g_m(\theta))^T.$ 
\end{definition}

\begin{proposition}{\cite{ehrgott2005multicriteria}}
\label{prop22}
    The optimal solution of the Problem \eqref{MP} is the weakly efficient point (weakly pareto solution) of the Problem \eqref{MOP}.
\end{proposition}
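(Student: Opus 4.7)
The plan is to argue by contradiction, exploiting the fact that the finite maximum of strictly-ordered finite families preserves strict inequality. Concretely, suppose $\theta^\ast$ is an optimal solution of \eqref{MP}, so that $G(\theta^\ast) \le G(\theta)$ for every $\theta \in \R^n$ (and in particular for every $\theta \in \mathcal{X}$). Assume, for contradiction, that $\theta^\ast$ is not a weakly efficient point of \eqref{MOP}. By definition of weak efficiency, there would then exist some $\theta' \in \mathcal{X}$ that strictly dominates $\theta^\ast$ in the objective space, i.e.\ $g_i(\theta') < g_i(\theta^\ast)$ for every $i = 1, 2, \dots, m$.

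The main step is then to translate this componentwise strict inequality into a strict inequality for $G$. Since $g_i(\theta^\ast) \le G(\theta^\ast)$ for each $i$, the assumed strict dominance gives $g_i(\theta') < G(\theta^\ast)$ for every $i = \overline{1,m}$. Because the index set $\{1,\dots,m\}$ is finite, the maximum is attained at some index $i_0$, and
\[
G(\theta') \;=\; \max_{i=\overline{1,m}} g_i(\theta') \;=\; g_{i_0}(\theta') \;<\; G(\theta^\ast).
\]
This contradicts the assumption that $\theta^\ast$ is optimal for \eqref{MP}, completing the argument.

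I expect no serious obstacle here; the only point that needs a moment of care is the preservation of strict inequality when passing from the $g_i$ to their pointwise maximum, which is exactly where the finiteness of $m$ is used (the analogous claim fails for infinite families, where one may only conclude $\sup_i g_i(\theta') \le G(\theta^\ast)$). The proof is therefore essentially a one-line reduction, and no appeal to the gradient calculus, semiconvexity, or the algorithm itself is required; the result depends only on the definitions of $G$, of weak efficiency, and of an optimal solution.
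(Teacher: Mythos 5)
Your proof is correct and is essentially the standard argument: the paper gives no proof of this proposition (it is quoted from Ehrgott's textbook), and the textbook proof is exactly your contradiction, where strict componentwise dominance $g_i(\theta') < g_i(\theta^\ast)$ for all $i=\overline{1,m}$ forces $G(\theta') < G(\theta^\ast)$, using finiteness of the index set. The only implicit point worth noting is that the \eqref{MP}-minimizer must be feasible for \eqref{MOP}, which holds in the paper's setting because the Tchebycheff formulation \eqref{MOP_1} is posed over all of $\R^n$.
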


After that, we consider some well-known techniques to address the Problem \eqref{MOP}, as described below:
\subsection{Linear Scalarization} 
One of the most prevalent approaches that have been usually used for tackling the Problem \eqref{MOP} is linear scalarization. This mechanism employs a linear combination of all objective function components and transform into the minimization problem of this term:
$$
\min _\theta G(\theta)=\sum_{i=1}^l w_i g_i(\theta),
$$
where in the given context, $w_i$ denotes the weight associated with the $i$-th objective function. Although this approach is relatively straightforward, it does have certain limitations, particularly concerning the initialization of the weight vector.

Typically, in practice, weights are set prior to the execution of the method. Consequently, the overall effectiveness of the approach is significantly influenced by the choice of these weights. Determining the optimal weights is an unresolved issue and remains a challenging task, even for experts in Multi-Objective Optimization (MOP) who are well-acquainted with the specific problem being addressed.


\subsection{Tchebycheff Scalarization}
Tchebycheff scalarization is one of the approaches for solving the Problem \eqref{MOP}. This approach is particularly useful for multiobjective optimization as it transforms the problem into a scalar one, making it more manageable to solve. The core concept of Tchebycheff scalarization closely parallels the process of solving a minimax problem, where the goal is to minimize the maximum deviation from the ideal solution. This concept will be further elaborated upon and illustrated in the following section, where we delve into its application and effectiveness in solving complex multiobjective optimization problems.

The explicit form of the Problem \eqref{MOP_0} is described below:

\begin{alignat*}{3}\label{MOP_0}
\min & \; \tau, \tag{$MOP_0$} \\
\text { s.t } & g(\theta)- v - \tau \hat{d} \leq 0; \\
& \theta \in \R^n, \; \tau \in \R, 
\end{alignat*}
where $\hat{d}, v \in \R^m$ be the initialized scalar and translational vectors, respectively.

The Problem \eqref{MOP_0} is equivalent to the following problem
\begin{alignat*}{2}\label{MOP_1}
\min_{\theta} & \max_{l}  \left\{\frac{g_l(\theta)-v_l}{\hat{d}_l} \mid l=1, \ldots, m\right\}, \tag{$MOP_1$} \\
\text { s.t } & \theta \in \R^n.
\end{alignat*}

By initializing $v$ and $d$, the Problem \eqref{MOP_1} may be deduced to solving approximately the efficient set. In the next section, we are going to discuss the case when $v = (0, \ldots, 0)^T$ and $\hat{d} = (1, \ldots, 1)^T$. Then, an efficient point of the multiobjective optimization can be solved by the Problem \eqref{MP}.

\subsection{Non-monotone adaptive algorithm}
Next, we propose the following algorithm to tackle the multiobjective optimization based on the results obtained from the Algorithm \ref{algo1}:

\begin{algorithm}[H]
\caption{Adaptive Non-monotone strategies for solving the \eqref{MOP} problem}
\hspace*{\algorithmicindent} \textbf{Input:} Approximation point $\theta^0$, translation vector $v$.\\
\begin{algorithmic}[1]
\State Using reference-based techniques to find the set of reference vectors $u_i$, $i = \overline{1, K}$. 
\FOR{$i=1$ to $K$}
\State For each reference vector, solving the Problem \eqref{MP_MOP} based on the Algorithm \ref{algo1}:
\State \begin{equation*}
\label{MP_MOP}\tag{MMOP}
    \min_{\theta_{i}} \max_j \; \left\{\dfrac{g_j(\theta_i) - v_j}{d_j}, j = 1, \ldots, m\right\}
\end{equation*}
in there $d_j$ be the $j-$index regarding to the reference vector $u_i$. 
\ENDFOR
\end{algorithmic}    
\label{algo2}
\hspace*{\algorithmicindent} \textbf{Output:} The solution points of the Problem \eqref{MOP}. 
\end{algorithm}

In this paper, we use the Das-Dennis Algorithm to find a set of equally reference vectors $u_i, i = \overline{1, K}$ so that we can strongly show the controllable capabilities vectors onto the Pareto front of our method. In addition, we choose translation vector $v$ as zero vector with dimension equal to the number of component functions of the multiobjective optimization.  

Next, we will discuss the convergence of Algorithm \ref{algo2} under non-convex, quasiconvex, and pseudoconvex conditions in relation to Problem \eqref{MOP}.

\subsection{Convergence of the algorithms}
\begin{theorem} 
For the sequence $\{\theta^k\}$ produced by the Algorithm \ref{algo1}, we establish the three following statements:
\begin{enumerate}
	\item If $g_i$, $i = \overline{1, m}$ is continuous differentiable and nonconvex, then the accumulation point (if exists) $\theta^*$ is the stationary point of the problem \eqref{MP} and the Pareto critical point of the Problem \eqref{MOP}.
	\item If $g_i$, $i = \overline{1, m}$ is continuous differentiable and quasiconvex, then the accumulation point $\theta^*$ is the stationary point of the Problem \eqref{MP} and the Pareto critical point of the Problem \eqref{MOP}.
	\item If $g_i$, $i = \overline{1, m}$ is continuous differentiable and pseudoconvex, then $\theta^*$ is the global minimum of the Problem \eqref{MP} and the weakly efficient point of the Problem \eqref{MOP}.
\end{enumerate}
\end{theorem}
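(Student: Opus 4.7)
The plan is to reduce each of the three statements to results already established, splitting every regime into an (MP)-part and an (MOP)-part. The (MP)-half of each assertion is essentially immediate: Theorems \ref{lemma33-4}, \ref{lemma33-5}, and \ref{lemma33-6} respectively give the claimed stationarity or global-minimum conclusion for the inner minimax subproblem \eqref{MP_MOP} that Algorithm \ref{algo2} solves at each reference vector. Before invoking those theorems I would verify the short fact that the positively rescaled and translated functions $\tilde g_j(\theta)=(g_j(\theta)-v_j)/d_j$ inherit continuous differentiability and the pertinent convexity-type property (nonconvexity is trivial; quasiconvexity and pseudoconvexity are preserved under positive affine transformations of the range, so Assumption set-up for the previous theorems carries over).

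For parts (1) and (2) the substantive bridge is from (MP)-stationarity at an accumulation point $\theta^*$ to Pareto criticality in the sense $J_g(\theta^*)(\R^n - \theta^*)\cap(-\R^m_{++})=\emptyset$. Lemma \ref{33-3} furnishes multipliers $\overline u^i \ge 0$ with $\sum_{i=1}^m \overline u^i = 1$, $\sum_{i=1}^m \overline u^i \nabla g_i(\theta^*) = 0$, and the complementarity $\overline u^i(g_i(\theta^*)-G(\theta^*))=0$. Suppose for contradiction there were $d\in\R^n$ with $\langle \nabla g_i(\theta^*), d\rangle < 0$ for every $i=1,\dots,m$. Weighting the $i$-th inequality by $\overline u^i$ and summing yields
\[
0 \;=\; \Bigl\langle \sum_{i=1}^m \overline u^i \nabla g_i(\theta^*),\,d \Bigr\rangle \;=\; \sum_{i=1}^m \overline u^i\,\langle \nabla g_i(\theta^*), d\rangle \;<\; 0,
\]
the strict inequality holding because $\sum_i \overline u^i = 1$ forces at least one multiplier to be strictly positive while every $\langle \nabla g_i(\theta^*), d\rangle$ is already strictly negative. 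This contradiction delivers Pareto criticality.

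For part (3), Theorem \ref{lemma33-6} already shows that $\theta^*$ is a global minimum of (MP), so Proposition \ref{prop22} immediately yields that $\theta^*$ is a weakly efficient point of (MOP). When Algorithm \ref{algo2} is used with general $v$ and a positive reference direction $d$, the same conclusion holds: any $\theta'$ that strictly dominated $\theta^*$ componentwise in $g$ would strictly reduce $\max_j (g_j(\cdot)-v_j)/d_j$ because $\tilde g$ is a componentwise increasing affine transformation of $g$, contradicting global optimality for \eqref{MP_MOP}.

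The main obstacle here is administrative rather than deep: one must be careful (i) that the convexity-type hypotheses transfer cleanly to the scalarized components $\tilde g_j$ so the previous theorems apply inside Algorithm \ref{algo2}, and (ii) that the convex-combination argument above really rules out descent directions for \emph{every} objective, rather than only those active at $\theta^*$. Point (ii) is automatic because the inactive-objective multipliers vanish by complementarity while the active-objective multipliers already force the strict inequality; point (i) is a routine check on the scalarization.
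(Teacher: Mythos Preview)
Your proposal is correct and follows essentially the same route as the paper: the (MP)-halves are delegated to Theorems \ref{lemma33-4}, \ref{lemma33-5}, \ref{lemma33-6}, Pareto criticality in cases (1)--(2) is obtained from the multiplier identity $\sum_i \overline u^i \nabla g_i(\theta^*)=0$ of Lemma \ref{33-3} via the same convex-combination contradiction, and case (3) is closed by Theorem \ref{lemma33-6} together with Proposition \ref{prop22}. Your additional remarks on the invariance of the generalized-convexity hypotheses under the positive affine scalarization $\tilde g_j=(g_j-v_j)/d_j$ are not in the paper's proof but are a reasonable bookkeeping check for the passage to Algorithm \ref{algo2}.
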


\begin{proof} 
Let consider the following two cases:
\begin{enumerate}
    \item \textit{\textbf{Nonconvex and Quasiconvex case}} 
\end{enumerate}

Based on the definition of the stationary point of the (MOP) problem, we have to prove that 
\begin{equation*}
J_g(\theta^*)(\R^n - \theta^*) \cap (-\R^{m}_{++}) = \emptyset,
\end{equation*}
where \(J_g(\theta^*)\) is the Jacobian matrix of \(G\) at \(\theta^*\) given by \(J_G(\theta) = (\nabla g_1(\theta), \ldots, \nabla g_m(\theta))^T\).

Let us consider \(\theta \in \R^n\), suppose that this statement is contrary. Then we have 
\(\left\langle \nabla g_i(\theta^*), (\theta - \theta^*) \right\rangle < 0, \forall i = \overline{1, m}\). 
From the previous Theorem, for the accumulation point generated by algorithm \ref{algo1}, we have the following condition 
\begin{equation*}
\sum_{i =1}^m \overline{u}^i \nabla g_i(\theta^*) = 0.
\end{equation*}

Hence, we have \(0 = \left\langle \sum_{i =1}^m \overline{u}^i \nabla g_i(\theta^*) , \theta - \theta^* \right\rangle = \sum_{i =1}^m \overline{u}^i \left\langle \nabla g_i(\theta^*), \theta - \theta^* \right\rangle < 0\), which is contrary, so there exists \(i = \overline{1, m}\) such that \(\langle \nabla g_i(\theta^*), \theta - \theta^* \rangle \ge 0\). Thus, \(\theta^*\) is the stationary point of Problem \eqref{MOP}. 

\begin{enumerate}
    \item[(2)] \textit{\textbf{Pseudoconvex case}} 
\end{enumerate}

From Lemma \ref{lemma33-6}, we obtain that \(\theta^*\) is the global minimum of the \eqref{MP} problem. In addition, By using Proposition \ref{prop22}, we conclude that \(\theta^*\) is the weakly efficient point of the Problem \eqref{MOP}.

\end{proof}


\section{Numerical Results}
In this section, we assess the effectiveness of the proposed Algorithm \ref{algo2}, which builds upon the non-monotone adaptive strategies of Algorithm \ref{algo1}, by applying it to several medium- to small-scale, unconstrained multiobjective optimization problems. The numerical experiments were executed on NVIDIA V100 GPUs with 80 GB of memory, using Python 3.10 within a Linux operating environment. For these experiments, we configured the hyperparameters with $\varepsilon = 0.4$, $\sigma = 0.9$, $\eta_k = 0.01^k$, and an initial $\alpha_0 = 1$ for the first iteration. The translation vector $v$ was set to a zero vector, and the scalar vectors $u_i$, for $i = \overline{1, K}$, were distributed equally using the Das-Dennis Algorithm, where $K$ represents the number of reference vectors. The test problems are divided into three scenarios: the first example evaluates the algorithm's performance on a convex \eqref{MOP} problem; the second tests its effectiveness on a non-convex \eqref{MOP} problem in a higher-dimensional space (20 dimensions); and the third explores a case involving both a convex and a pseudoconvex component. Each scenario employs consistent techniques regarding the reference-based vectors. Detailed explanations of these experiments are provided below:

\begin{example}
Let consider the convex multiobjective optimization

    $$\begin{aligned}
\label{4.1}
& \operatorname{min} g(\theta)=\left\{\frac{1}{25} \theta_1^2+\frac{1}{100}\left(\theta_2-\frac{9}{2}\right)^2, \frac{1}{25} \theta_2^2+\frac{1}{100}\left(\theta_1-\frac{9}{2}\right)^2\right\}, \\
& \text { s.t. } \theta \in \mathbb{R}^2.
\end{aligned}$$
\end{example}

The goal is to minimize the two objective functions simultaneously while adhering to the constraints. The results obtained through our algorithm, which are aligned with the provided reference vectors, are shown in Figure \ref{fig:4-1}. 

\begin{figure}[H]
    \centering
    \renewcommand{\figurename}{Figure}
    \includegraphics[width = 0.81\textwidth]{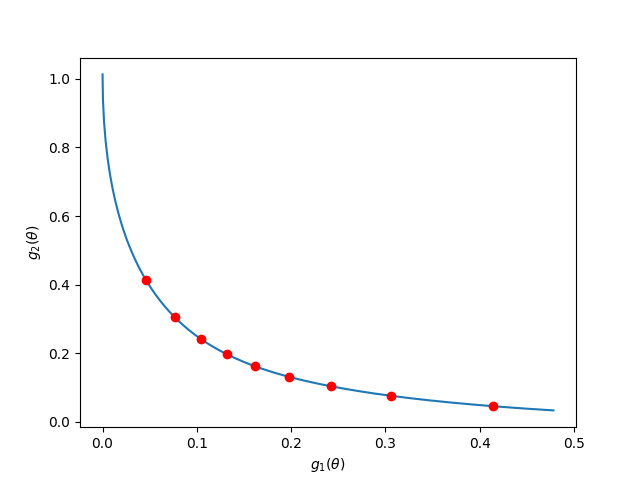}
    \caption{Results from Algorithm \ref{algo2} applied to Example \hyperref[4.1]{5.1}}
    \label{fig:4-1}
\end{figure}

This example serves as a clear demonstration of the algorithm’s effectiveness in solving convex multiobjective problems by accurately identifying the optimal solutions corresponding to the given reference vectors.



\begin{example}{(Lin et al. \cite{lin2019pareto})}
   Let analyse the following nonconvex, unconstrained multicriteria problem
    $$
\begin{aligned}
\label{4.2}
& \operatorname{min} g(\theta)=\left\{1-\exp({-\sum_{j=1}^n\left(\theta_j-\frac{1}{n}\right)^2}), 1-\exp({-\sum_{j=1}^n\left(\theta_j+\frac{1}{n}\right)^2})\right\}, \\
& \text { s.t. } x \in \mathbb{R}^{n}.
\end{aligned}
$$
\end{example}

In this example, we investigate the performance of our proposed algorithm on a complex nonconvex, unconstrained multicriteria optimization problem originally presented by Lin et al. \cite{lin2019pareto}. The objective functions are designed to challenge optimization techniques due to their nonconvex nature. 

\begin{figure}[H]
    \centering
    \renewcommand{\figurename}{Figure}
    \includegraphics[width = 0.81\textwidth]{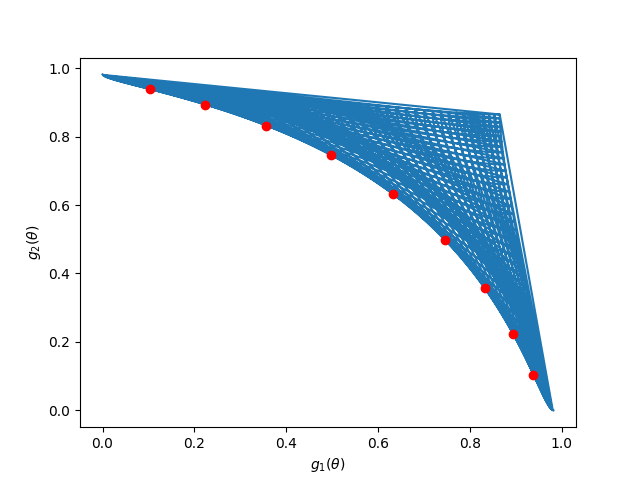}
     \caption{Results from Algorithm \ref{algo2} applied to Example \hyperref[4.2]{5.2}}
    \label{fig:4-2}
\end{figure}

By setting the dimensionality $n$ to $20$ and using nine reference points, we demonstrate the algorithm's ability to effectively handle and solve high-dimensional, nonconvex problems. The results, as shown in Figure \ref{fig:4-2}, provide a clear indication of the algorithm's robustness and accuracy in this context.

\begin{example}{(Zhao et al. \cite{zhao2021convergence})}
    Consider the multiobjective optimization where the objective functions are defined as follows:
    $$
\begin{aligned}
\label{4.4}
& \text{min } g(\theta) = \left\{\frac{1}{\theta_1^2+\theta_2^2+1}, \theta_1^2+3 \theta_2^2+1 \right\}\\
& \text { s.t. } \theta \in \mathbb{R}^{2}.
\end{aligned}
$$
\end{example}

In this example, we aim to demonstrate the accuracy of Algorithm \ref{algo2} in addressing non-convex multiobjective problems that involve both convex and pseudoconvex components. The numerical experiments validate the effectiveness of our theoretical results related to Algorithms \ref{algo1} and \ref{algo2}. 

\begin{figure}[H]
    \centering
    \renewcommand{\figurename}{Figure}
    \includegraphics[width = 0.81\textwidth]{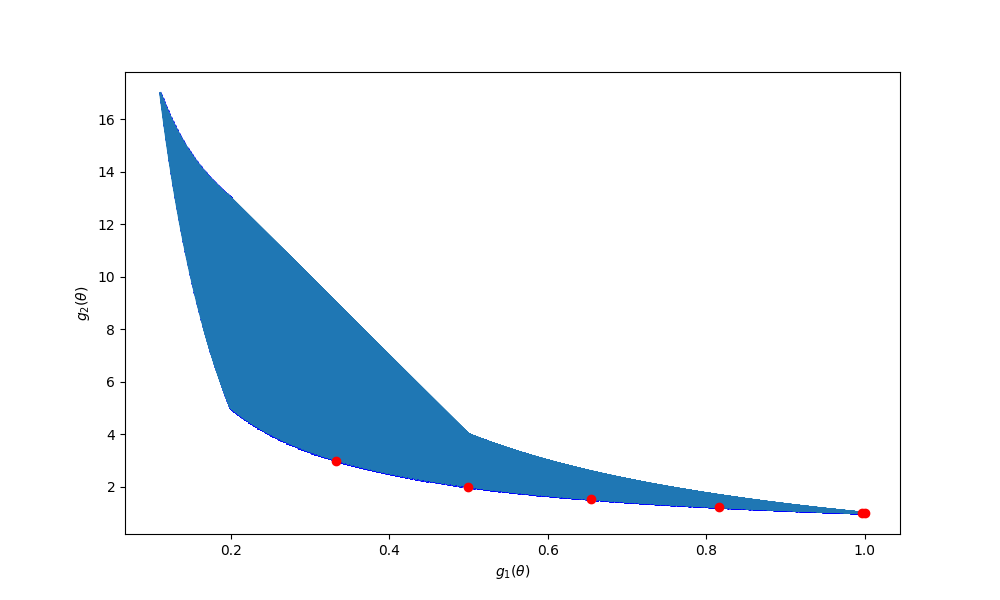}
     \caption{Results from Algorithm \ref{algo2} applied to Example \hyperref[4.4]{5.3}}
    \label{fig:4-3-1}
\end{figure}

These results are illustrated in Figure \ref{fig:4-3-1}, which displays nine reference vectors of varying scales corresponding to the objective component function values.

\section{Conclusion}

This paper investigates the steepest descent method combined with a non-monotone adaptive strategy for tackling non-convex multiobjective optimization problems. With relatively mild assumptions, this approach - featuring the steepest descent technique and a non-monotone adaptive step size - is shown to converge to solutions that are efficient (Pareto stationary) and meet certain technical criteria (quasiconvex or pseudoconvex). Furthermore, the method's effectiveness is demonstrated through numerical experiments addressing Problem \eqref{MOP} based on Problem \eqref{MP}, highlighting its accuracy and efficiency across various types of multiobjective optimization problems and different dimensionalities.

This work lays the foundation for future practical applications of this non-monotone method. Additionally, by employing various reference-based techniques instead of the Das-Dennis algorithm, a broad spectrum of weakly efficient points on the Pareto front can be obtained, irrespective of the scale of the objective component functions. Moreover, adapting this method to handle multiple nonsmooth objectives presents a promising avenue for future research.

\bibliographystyle{tfnlm}
\bibliography{refs}

\end{document}